\newcommand{\erase}[1]{}
\theoremstyle{definition}
\numberwithin{equation}{section}
\newtheorem{thm}{Theorem}[section]
\newtheorem{dfn}[thm]{Definition}
\newtheorem{prop}[thm]{Proposition}
\newtheorem{cor}[thm]{Corollary}
\newtheorem{lem}[thm]{Lemma}
\newtheorem{rem}[thm]{Remark}
\newcommand{\exam}{\noindent{\textbf{Example}:}}
\newcommand{\bb}[1]{{\mathbb{#1}}}
\newcommand{\mca}[1]{{\mathcal{#1}}}
\newcommand{\bracket}[1]{{\langle {#1} \rangle}}
\def\Aut{\mathop{\mathrm{Aut}}\nolimits}
\def\deg{\mathop{\mathrm{deg}}\nolimits}
\def\div{\mathop{\mathrm{div}}\nolimits}
\def\End{\mathop{\mathrm{End}}\nolimits}
\def\id{\mathop{\mathrm{id}}\nolimits}
\def\Pic{\mathop{\mathrm{Pic}}\nolimits}
\def\rank{\mathop{\mathrm{rank}}\nolimits}
\def\Sp{\mathop{\mathrm{Sp}}\nolimits}
\def\tr{\mathop{\mathrm{tr}}\nolimits}
\title{Hutchinson-Weber involutions degenerate exactly when the Jacobian is Comessatti}
\author{Hisanori Ohashi}\thanks{Research Institute for Mathematical Sciences, Kyoto University, Kyoto 606-8502, JAPAN}
\date{}
\begin{document}
\maketitle
\begin{abstract}
We consider the Jacobian Kummer surface $X$ of a genus two curve $C$. We prove that 
the Hutchinson-Weber involution on $X$ degenerates if and only if 
the Jacobian $J(C)$ is Comessatti. Also we give several conditions equivalent to this, 
which include the classical theorem of Humbert. 
The key notion is the Weber hexad. We include explanation of them and 
discuss the dependence between the conditions of main theorem for various Weber hexads.
It results in "the equivalence as dual six". 
We also give a detailed description of relevant moduli spaces.
As an application, we give a conceptual proof of the computation of the patching subgroup
for generic Hutchinson-Weber involutions.
\end{abstract}

\section{Introduction}

Let $J(C)$ be the Jacobian of a curve $C$ of genus two and $X$ the minimal 
desingularization of $\overline{X}=J(C)/\iota$, $\iota = -\id$. Here every variety 
we consider is over $\bb{C}$. 
$X=Km(J(C))$ is 
called a Jacobian Kummer surface which 
is well-known to be a $K3$ surface. 

In \cite{ohashi09} we classified fixed-point-free involutions on $X$, or equivalently
Enriques surfaces whose covering $K3$ surface is isomorphic to $X$,
under the condition that $X$ is Picard-general. They consist of 
$10$ switches, $15$ Hutchinson-G\"{o}pel involutions and $6$ Hutchinson-Weber 
involutions. In this paper we focus on the Hutchinson-Weber (HW) involutions;
the point of our discussion here 
is that we do not assume any kind of generality on the curve $C$.

HW involutions are closely related to the classical notion of {\em{Weber hexads}}
and associated {\em{Hessian models}} of $X$ as treated in \cite{dolgachev-keum}. 
We recall these notions in the first half of Section
\ref{hess}. Besides the definition itself, the equivalence relation "as dual six" 
plays an important role in this paper.
In the latter half, we study the singularities of Hessian models. 
We prove that the singularities of a Hessian model is either $10$ or $11$ nodes 
(Corollary \ref{11}). 
Moreover we show that $11$th node occurs exactly when the associated HW involution acquires 
fixed loci (Proposition \ref{41}), namely when the HW involution degenerates. 

On the other hand, 
an abelian surface $A$ is called a {\em{Comessatti surface}} if it has real multiplication 
in the maximal order $\mca{O}_{\bb{Q}(\sqrt{5})}$ of $\bb{Q} (\sqrt{5})$ \cite{hulek-lange}.
A classical theorem of Humbert characterizes Comessatti Jacobians in terms of the 
branch points $p_1,\cdots, p_6$ of the 
bicanonical map $C\rightarrow \text{(a conic)}\subset \bb{P}^2$, see for example \cite{vdg}.
\begin{thm}(Humbert)
The Jacobian $J(C)$ is Comessatti if and only if for a suitable labeling 
of branch points
there exists a conic $D$ 
which is inscribed to the pentagon $p_1\cdots p_5$ and passes through $p_6$.
\end{thm}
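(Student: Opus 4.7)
The plan is to identify both sides of the equivalence with concrete divisors in the moduli space $\mca{M}_2$ of genus two curves and show that they coincide. On the Hodge-theoretic side, $J(C)$ is Comessatti if and only if its class lies on the \emph{Humbert surface} $\mca{H}_5$ of discriminant $5$ in $\mca{A}_2$, an irreducible subvariety cut out by a single integral quadratic condition on the period matrix; pulling back through the Torelli map yields an irreducible divisor $\mca{H}_5^J\subset \mca{M}_2$. On the geometric side, fix a labeling of the Weierstrass set $\{p_1,\ldots,p_6\}$; the five tangency conditions imposed by the pentagon $p_1\cdots p_5$ on the $5$-dimensional space of conics uniquely determine an inscribed conic $D$, so ``$p_6\in D$'' is a single polynomial equation in the six branch points, cutting out a divisor $\mca{L}_\sigma\subset \mca{M}_2$. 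Writing $\mca{L}=\bigcup_\sigma \mca{L}_\sigma$ over labelings, the goal is to prove $\mca{L}=\mca{H}_5^J$.

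First I would establish the inclusion $\mca{L}\subseteq \mca{H}_5^J$. Given the geometric configuration, consider the pencil $\{\lambda K+\mu D\}$ of conics; its discriminant is a binary cubic, and the associated double cover defines an elliptic curve $E$. The fact that $p_1\cdots p_5$ is a Poncelet pentagon for the pair $(D,K)$ is, by Cayley's closure criterion, equivalent to the existence of a $5$-torsion point $T\in E$. I would then lift the Poncelet $(2,2)$-correspondence on $K\cong\bb{P}^1$ to a correspondence $Z\subset C\times C$ via the hyperelliptic double cover, and show that the induced endomorphism $\vp\in\End(J(C))$ satisfies an integral quadratic equation of discriminant $5$. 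The additional hypothesis $p_6\in D$ is what ensures that the lift extends as a genuine endomorphism and that $\vp$ has the correct minimal polynomial; this gives real multiplication by $\mca{O}_{\bb{Q}(\sqrt{5})}$.

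For the reverse inclusion, I would combine dimension with irreducibility. A parameter count gives $\dim\mca{L}_\sigma=\dim\mca{L}=2=\dim \mca{H}_5^J$. The first direction shows that some component of $\mca{L}$ sits inside the irreducible $\mca{H}_5^J$, so by dimension it exhausts $\mca{H}_5^J$; conversely, the polynomial defining $\mca{L}_\sigma$ depends only on the unordered partition $\{p_1,\ldots,p_5\}\sqcup\{p_6\}$ (up to the rotational symmetry of the pentagon), so the different $\mca{L}_\sigma$ merge into a single irreducible divisor equal to $\mca{H}_5^J$.

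The main obstacle is the explicit verification in the first inclusion: showing that the Poncelet correspondence $Z$ produces an endomorphism of $J(C)$ with minimal polynomial $t^2-t-1$ (or equivalent with discriminant $5$). This requires a careful N\'eron--Severi computation on $C\times C$, or alternatively the identification of the Poncelet elliptic curve $E$ with a quotient of $J(C)$ compatible with the pentagon symmetry. The precise role of $p_6\in D$ in ensuring that the lifted correspondence behaves correctly at the sixth branch fibre, and in pinning down the quadratic relation rather than merely a torsion condition on the Poncelet curve alone, is the technical heart of the argument.
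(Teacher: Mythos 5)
Your route is genuinely different from the one the paper relies on. The paper does not prove Humbert's theorem via Poncelet closure or via divisors in $\mca{M}_2$ at all: it quotes \cite{vdg} and then effectively reproves the statement inside Theorem \ref{main} by identifying the conic $D$ (projectively dual to the conic $E'$ of condition (5)) with the projection of a twisted cubic $\overline{E}$ through six nodes of the Kummer quartic, pulling $\overline{E}$ back to a genus two curve $\Xi$ on $J(C)$ with $(\Xi,\Theta)=3$, and converting that single intersection number, through the isomorphism $NS(J(C))\simeq\End^{\mathrm{sym}}(J(C))$ of (\ref{symm}) and the trace formula (\ref{3}), into the relation $\varphi^2-3\varphi+1=0$; the converse is Proposition \ref{weber} together with Lemma \ref{cha}. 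That argument proves both implications directly, needs no irreducibility of the Humbert surface and no dimension count, and as a bonus pins down which six $2$-torsion points (a Weber hexad) the curve $\Xi$ passes through, which is the refinement the paper actually cares about.

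The genuine gap in your proposal sits exactly where you place "the technical heart," and it is not a detail that can be deferred. First, the Cayley/Poncelet step is vacuous in your configuration: you define $D$ as the conic tangent to the five sides of $p_1\cdots p_5$, so $p_1\cdots p_5$ is a Poncelet pentagon for $(K,D)$ by construction, the $5$-torsion point on the pencil curve exists automatically, and no information about $C$ has been used. All of the content of the theorem is concentrated in the single incidence $p_6\in D$, and the proposal never explains the mechanism by which that one condition forces the pullback of the tangential $(2,2)$-correspondence to $C\times C$ to split (equivalently, to define an honest endomorphism rather than a multivalued one), nor does it compute the trace and norm of the resulting endomorphism to see that its discriminant is $5$ rather than some other value. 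Without that N\'eron--Severi computation the forward inclusion $\mca{L}\subseteq\mca{H}_5^J$ is unproved, and since your reverse inclusion is parasitic on the forward one (it needs a nonempty two-dimensional component of $\mca{L}$ already known to lie in the irreducible $\mca{H}_5^J$), the whole argument collapses to the deferred step. Secondary but real omissions: you must check that the incidence polynomial defining $\mca{L}_\sigma$ is not identically zero (for a generic sextic the inscribed conic misses $p_6$) and not empty, and you are invoking the irreducibility of $\mca{H}_5$, a nontrivial input (transitivity of $\Sp_4(\bb{Z})$ on primitive singular relations of discriminant $5$) that should at least be cited. By contrast, the paper's route replaces all of this with the one-line observation that a genus two curve meeting $\Theta$ in $3$ points has symmetric endomorphism class of trace $3$ and norm $1$.
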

The projective dual of the six points $p_1, \cdots, p_6$ is the six branch lines
of the double plane model of Jacobian Kummer surface $X=Km(J(C))$.  
The dual of the conic $D$ induces a
new genus two curve on $J(C)$ different from the (translations of) theta divisors.
Equivalently, these curves are the pullbacks of the theta divisors by 
the automorphism $(\pm 1 + \sqrt{5})/2$.
We show that each of these curves passes through six $2$-torsion points, which form a Weber hexad
(Proposition \ref{weber}). As we expect easily, this curve corresponds exactly to the 
11th node of the Hessian model (Theorem \ref{main}, (1)$\Leftrightarrow$(4)).
Our main theorem is as follows.
\begin{thm}(Theorem \ref{main})
Let $C$ be a curve of genus two and $(X,W)$ its Jacobian Kummer surface and a Weber hexad on it.
Then the following conditions are equivalent.
\begin{enumerate}
\item The Hessian model $X_W$ acquires the $11$th node.
\item The Hutchinson-Weber involution $\sigma_W$ degenerates in the sense that 
it acquires fixed loci.
\item The unique twisted cubic $\overline{E}$ passing through the nodes 
$\{n_{\alpha}\}_{\alpha\in W}$ of $\overline{X}$
lies on the Kummer quartic surface $\overline{X}$. (Here
the strict transform $E\subset X$ satisfies the relations in Proposition \ref{11th}.)
\item The Jacobian $J(C)$ is a Comessatti surface and one curve $\Xi$ among (\ref{32})
passes through the $2$-torsion points corresponding to $W$.
\item In the double plane model (Proposition \ref{H-N}) projected from 
one node $n_{w_0}\ (w_0\in W)$, there exists an additional conic $E'\subset \bb{P}^2$ which passes
through the vertices of the pentagon formed by five images of $\{n_w; w\in W-\{w_0\}\}$ 
and tangent to the remaining branch line.
For example, when $W=\{0,12,23,34,45,51\}$ and $w_0=0$ as in Proposition \ref{H-N}, 
then the pentagon is formed by $l_1, \cdots, l_5$ and the last line is $l_6$. 
\end{enumerate}
\end{thm}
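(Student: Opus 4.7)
My plan is to establish the equivalences as a chain $(2)\Leftrightarrow(1)\Leftrightarrow(3)\Leftrightarrow(4)\Leftrightarrow(5)$, leveraging the preparatory results of earlier sections. The equivalence $(1)\Leftrightarrow(2)$ I would take directly from Proposition \ref{41}, which ties the existence of the 11th node of $X_W$ to the degeneration of $\sigma_W$, so no further work is needed there.

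For $(1)\Leftrightarrow(3)$, I would note that the six nodes $\{n_\alpha\}_{\alpha\in W}$ of $\overline X\subset\bb{P}^3$ lie on a unique twisted cubic $\overline E$, and argue that the 11th node of $X_W$ exists precisely when $\overline E\subset\overline X$. In the forward direction, the strict transform $E\subset X$ is then a smooth rational curve meeting each of the six exceptional $(-2)$-curves over $W$ transversally, and the contraction $X\to X_W$ collapses $E$ to a point, producing the 11th node. For the converse, one identifies the 11th node as necessarily arising by contraction of such an $E$; the Picard-theoretic relations in Proposition \ref{11th} determine $E$ uniquely.

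For $(3)\Leftrightarrow(4)$, I would use the double cover $\pi\colon J(C)\to\overline X$. Given $\overline E\subset\overline X$ as in (3), the preimage $\pi^{-1}(\overline E)$ is a symmetric curve on $J(C)$ of arithmetic genus $2$ (as a double cover of $\bb{P}^1\cong\overline E$ branched at the six $2$-torsion points indexed by $W$), with intersection properties that force its class to lie outside $\bb{Z}\cdot\Theta$. A Rosati-involution computation then shows that the induced endomorphism is the golden-ratio element $(\pm 1+\sqrt{5})/2$, exhibiting $J(C)$ as Comessatti and identifying the curve with one of the $\Xi$ in (\ref{32}). Conversely, once $J(C)$ is Comessatti and $\Xi$ passes through the $2$-torsion points indexed by $W$, Proposition \ref{weber} guarantees that $\pi(\Xi)$ descends to a rational cubic through $\{n_\alpha\}_{\alpha\in W}$, which must be $\overline E$.

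Finally, $(4)\Leftrightarrow(5)$ follows from Humbert's theorem combined with projective duality: the six branch points $p_1,\ldots,p_6$ on the Weierstrass conic are projectively dual to the six branch lines $l_1,\ldots,l_6$ of the double plane model in Proposition \ref{H-N}, and under this duality a conic $D$ inscribed to the pentagon $p_1\cdots p_5$ and passing through $p_6$ corresponds exactly to a conic $E'$ through the five pentagon vertices of $l_1,\ldots,l_5$ and tangent to $l_6$. The hardest step will be $(3)\Rightarrow(4)$: pinning down the divisor class of $\pi^{-1}(\overline E)$ on $J(C)$ precisely enough to identify the induced endomorphism as $(1+\sqrt{5})/2$, and ruling out other candidate classes using the Weber hexad combinatorics rather than any genericity assumption on $C$.
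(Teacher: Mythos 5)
Your chain $(2)\Leftrightarrow(1)\Leftrightarrow(3)\Leftrightarrow(4)$ matches the paper's proof almost step for step: $(1)\Leftrightarrow(2)$ is Proposition \ref{41}, $(1)/(2)\Leftrightarrow(3)$ is Proposition \ref{11th}, and your $(3)\Rightarrow(4)$ is exactly the paper's argument (the preimage $\Xi=\pi^{-1}(\overline{E})$ is a symmetric genus-two curve with $(\Xi,\Theta)=3$, the corresponding symmetric endomorphism satisfies $\varphi^2-3\varphi+1=0$, and Lemma \ref{cha} identifies $\Xi$ with an element of $\mca{W}$; the converse is also contained in Lemma \ref{cha}). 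The step you flag as hardest is in fact already fully prepared by Section \ref{HWi}.

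The one place you genuinely diverge is condition (5), and there is a gap there. You propose $(4)\Leftrightarrow(5)$ by citing the classical Humbert theorem plus projective duality. But Humbert's theorem, as stated, only asserts the existence of the inscribed conic \emph{for a suitable labeling} of the branch points; it does not track which Weber hexad $W$ (equivalently which curve $\Xi\in\mca{W}$) corresponds to which pentagon-and-tangent-line configuration. Conditions (4) and (5) of the theorem both refer to a \emph{specific} $W$, and matching them is precisely the refinement beyond Humbert that the theorem is claiming --- the paper's own remark says the classical proof covers $(3)\Leftrightarrow(4)\Leftrightarrow(5)$ ``except for mentioning Weber hexads.'' So invoking Humbert here either begs the question or only yields the unrefined ``for some hexad'' statement. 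The paper avoids this by proving $(3)\Leftrightarrow(5)$ directly and elementarily: the twisted cubic $E$ satisfies $(E,H-N_{w_0})=2$, so its image under the projection from $n_{w_0}$ is a conic $E'$ through the five images of $\{n_w: w\in W\setminus\{w_0\}\}$, and the intersection numbers with the tropes force tangency to the remaining branch line. You should replace your $(4)\Leftrightarrow(5)$ step by this projection argument (or otherwise supply the hexad-by-hexad matching that duality alone does not give).
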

The equivalence between (4) and (5) is nothing but the above theorem of Humbert,
stated in the dual projective space.
But our theorem is a bit extended in the sense that we refer to the Weber hexads. 
Weber hexads are essentially divided into the "dual" six, Section \ref{hess},
and we can show that for equivalent Weber hexads, the conditions 
in the theorem are equivalent (Proposition \ref{six}).
Thus our theorem is more quantative than known even considered 
as the extention of theorem of Humbert, 
and the equivalence with conditions (1) and (2) are apparently new.
This theorem explains the title: "Hutchinson-Weber involutions degenerate 
exactly when the Jacobian is Comessatti". 

In Section \ref{mod} we give a detailed description of the moduli space
of Jacobian Kummer surfaces, Jacobian Kummer surfaces equipped with an equivalence class
of Weber hexad and the locus of degenerate Hutchinson-Weber involutions. We 
use the theory of period maps for $K3$ surfaces. We obtain the irreducibility of 
the moduli space of Comessatti Jacobian Kummer surfaces, Theorem \ref{moduli}.

In the last section, we give an application of this characterization 
to the computation of patching subgroups (see \cite{ohashi09}) 
of HW involutions. It seems interesting to the author 
that we can derive consequences to Picard-general
Jacobian Kummer surfaces by studying the degenerations. 

In this paper we restrict ourselves to genus two curves.
The suitable extention to the reducible principally polarized abelian surfaces,
namely the product of elliptic curves, is entirely left as a further problem.

\noindent {\bf{Acknowledgement.}} The author is grateful to Shigeru Mukai for 
fruitful discussions. His suggestion to Comessatti surfaces was the starting point
of this paper. Also the ingredient of the last section is fixed in the 
discussion with him. 

The author is supported by global COE program of Kyoto university.
This work was supported by KAKENHI 21840031.

\erase{A smooth projective surface $Y$ is called an Enriques surface if it satisfies
$K_Y\not\sim 0, 2K_Y\sim 0$ and $H^1 (Y,\mca{O}_Y)=0$, where $K_Y$ is the canonical
divisor class. As is well-known the canonical cover $X$ of $Y$ is a $K3$ surface, 
namely a projective surface with $K_X\sim 0, H^1(X,\mca{O}_X)=0$.
$Y$ is the quotient of $X$ by a fixed-point-free involution.

Several interesting examples of free involutions on concrete $K3$ surfaces are known.
For example, let $(A,\mca{O}(C))$ be a principally polarized abelian surface and 
$Km (A)$ the associated Kummer surface.
Recall that $Km(A)$ is the minimal desingularization of $A/\{\pm 1\}$,
which is a $K3$ surface. 
Suppose that the polarization $C$ is reducible. We see that the Picard number 
$\rho (Km (A))$ is $\ge 18$. If $\rho =18$, then we can construct 
Lieberman involutions and Kondo-Mukai involutions, which are free. 
Moreover we can show that these are the all free involutions on $Km(A)$,
in the sense that any free involution on $Km (A)$ is conjugate to one of them, 
\cite{ohashi07}.
Similarly, suppose $C$ is irreducible.
In this case $Km(A)$ is called the Jacobian Kummer surface.
Suppose moreover that $\rho$ takes the minimum possible value $17$. 
In this case there are switches, Hutchinson-G\"{o}pel involutions 
and Hutchinson-Weber involutions and we can show that these are the all 
free involutions, \cite{ohashi08}.

In this paper, first we present an elementary construction of 
Hutchinson-Weber (shortly, HW) involutions. They have been 
found in \cite{dolgachev-keum}, although it seems that 
they already appear in \cite{hut} disregarding the freeness. 
The advantage of our description is that we can find the degenerate loci 
of the moduli space of the pair consisting of Jacobian Kummer surface
and a HW involution. It has an application to the computations in \cite{ohashi08}.

Let $C$ be a smooth projective curve of genus $2$ and $A=J(C)$ its Jacobian surface.
$X=Km(A)$ is called the Jacobian Kummer surface of $C$. 
On $X$, we have the famous $(16)_6$ configuration of smooth rational curves.
See Section \ref{166}.
Let $W$ be a Weber hexad, a special $6$-subset of the configuration. 
The HW involution $\sigma_W$ is constructed associated with $W$.
Although in the
general case $\sigma_W$ is free, it happens that $\sigma_W$ has fixed points 
on the specialized surfaces. Our result states that they are exactly Comessatti 
Kummer surfaces.

\begin{dfn}
Let $A$ be a principally polarized abelian surface. We say $A$ is a 
Comessatti abelian surface if $\End_{\bb{Q}}(A)\supset \bb{Q}(\sqrt{5})$ holds.
\end{dfn}

A Comessatti Kummer surface is $Km (A)$ of a Comessatti abelian surface $A$.

\begin{thm}
Let $X=Km(J(C)), W$ be a pair consisting of a Jacobian Kummer surface $X$ 
and a Weber hexad $W$ on $X$. Then $\sigma_W$ has fixed points 
if and only if $(X, W)$ is the Comessatti pair, i.e., 
.....
\end{thm}
}

\section{Jacobian Kummer surfaces}\label{JK}

Here we recall the construction of Jacobian Kummer surfaces and fix the notation.
We use the same indexing of divisors as in \cite{ohashi09}.

Let $C$ be a smooth projective curve of genus $2$. Let $J(C)=\Pic^0 (C)$ be its Jacobian
variety. It has the inversion morphism $\iota \colon x\mapsto -x$. We denote by $\overline{X}
=\overline{Km}(J(C))$ the quotient surface $J(C)/\iota$ and by $X=Km (J(C))$ the minimal
resolution. $X$ is a $K3$ surface associated to $C$ and called the 
{\em{Jacobian Kummer surface}} of $C$.
\[J(C) \quad \stackrel{/ \iota}{\longrightarrow} \quad \overline{X} \quad 
\stackrel{\text{min. res'n}}{\longleftarrow} \quad X\]

In the following we introduce several divisors on $X$ whose configuration is called 
the {\em{$(16)_6$-configuration}} on $X$. 
Recall that the morphism 
associated to the canonical system $|K_C|$ represents $C$ 
as a double cover of $\bb{P}^1$ ramified at $6$ Weierstrass points $p_1, \cdots, p_6\in C$.
Using them,
the set of $2$-torsion points of the Jacobian can be written as 
\begin{eqnarray*}
J(C)_2  &=&  \{ \alpha\in \Pic^0 (C)\mid 2\alpha \sim 0 \} \\
&=& \{0\} \cup \{ [p_i-p_j] \mid i\neq j\}.
\end{eqnarray*}
$2$-torsion points naturally correspond to the nodes $n_{\alpha}$ of $\overline{X}$ and 
exceptional curves $N_{\alpha}$ of $X$. 
On the other hand, the set of 
theta characteristics of $C$ can be written as 
\begin{eqnarray*}
S(C) &=& \{ \beta\in \Pic^1 (C) \mid 2\beta \sim K_C \} \\
 &=& \{ [p_i] \mid i=1,\cdots ,6 \} \cup \{ [p_i+p_j-p_k] \mid i\neq j\neq k\neq i\}.
\end{eqnarray*}
They also correspond to smooth rational curves on $\overline{X}$ and $X$ called {\em{tropes}};
the tropes $\overline{T}_{\beta} \subset \overline{X}$ and $T_{\beta}\subset X$ are
the strict transforms of the theta divisor 
\[ \Theta_{\beta}= \{ [p-\beta ] \in J(C) \mid p\in C\}. \] 
The incidence relation between $N_{\alpha}$ and $T_{\beta}$ is given by 
\begin{gather*}
(N_{\alpha},N_{\alpha '})=-2\delta_{\alpha,\alpha '},\qquad 
(T_{\beta},T_{\beta '})=-2\delta_{\beta,\beta '},\\
(N_{\alpha},T_{\beta})=1 \Leftrightarrow \alpha+\beta\in\{[p_1],[p_2],[p_3],[p_4],[p_5],[p_6]\},\\
(N_{\alpha},T_{\beta})=0 \qquad \text{otherwise}.
\end{gather*}
We will abbreviate $N_{[p_i-p_j]}$ to $N_{ij}$ and $T_{[p_i+p_j-p_k]}$ to $T_{ijk}$, etc.
We remark the relation $T_{ijk} = T_{lmn}$ for any permutation $i,\cdots, n$ of $1,\cdots,6$.

We will denote by $H$ the divisor class of $2T_1+N_0+\sum_{j=2}^6 N_{1j}$;
note that any analogous divisor
$2T_{\beta}+\sum_{(T_{\beta}, N_{\alpha})=1} N_{\alpha}$ gives the same divisor class as $H$.
The following fact is classically known and by this reasoning $\overline{X}$ is called 
the {\em{Kummer's quartic surface}}.
\begin{prop}(Kummer quartic model)\label{H}
The linear system $|H|$ induces an embedding of $\overline{X}$
into $\bb{P}^3$ as a quartic surface with sixteen nodes. 
The trope $\overline{T}_{\beta}\subset \overline{X}$ 
is a conic on $\overline{X}$ and the unique hyperplane containing $\overline{T}_{\beta}$
cuts $\overline{X}$ doubly along $\overline{T}_{\beta}$. 
\end{prop}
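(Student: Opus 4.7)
The proof has two separable parts: embedding $\overline{X}$ as a quartic in $\bb{P}^3$ via $|H|$, and reading off the trope statement from an explicit member of $|H|$.

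The first step is to record the numerical data for $H$ directly from the intersection table. Expanding $H = 2T_1 + N_0 + \sum_{j=2}^6 N_{1j}$ using $T_1^2 = N_\alpha^2 = -2$, the pairwise disjointness of the six $N$'s, and the fact that $T_1$ is incident to each of them, one computes
\[
H^2 = 4(-2) + 6(-2) + 4\cdot 6 = 4,
\]
and an analogous bookkeeping yields $H\cdot N_\alpha = 0$ for every node class $\alpha$ and $H\cdot T_\beta = 2$ for every trope class $\beta$ (the six incidences of any trope with the nodes on it always match the node-component of the chosen representative of $H$, cancelling the $-4$ coming from the $T_1^2$ term in the case $\beta = [p_1]$).

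Second, I would combine these data with the classical identification of $|H|$ with the linear system $|2\Theta|$ on $J(C)$ to obtain the quartic embedding. Riemann--Roch on the K3 surface $X$ gives $\chi(H) = H^2/2 + 2 = 4$, and the correspondence $H^0(X, \mca{O}(H)) \cong H^0(J(C), \mca{O}(2\Theta))$ produced by pulling back along $X \leftarrow \widetilde{J(C)} \to J(C)$ matches the classical count $h^0(2\Theta) = 4$. The classical theory of $|2\Theta|$ on a principally polarized abelian surface (Birkenhake--Lange) supplies: base-point-freeness, $\iota$-invariance, projective dimension three, and the factorization of the induced map through the Kummer quotient into $\bb{P}^3$, realizing $\overline{X}$ as a quartic with sixteen ordinary double points at the images of the $2$-torsion. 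Translated back to $X$, this says $\phi_H$ is a morphism that contracts each $N_\alpha$ (consistent with $H\cdot N_\alpha = 0$) and factors as $X \to \overline{X} \hookrightarrow \bb{P}^3$ with $\overline{X}$ the quartic as claimed.

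Finally, for the trope statement, the linear equivalence $H \sim 2T_\beta + \sum_{(T_\beta, N_\alpha) = 1} N_\alpha$ recalled just before the proposition exhibits the right-hand side as a divisor $D_\beta \in |H|$, equivalently as a hyperplane section of $\overline{X} \subset \bb{P}^3$. The six $N_\alpha$-components of $D_\beta$ collapse to their node-images in $\overline{X}$, so the pushforward cycle on $\overline{X}$ is $2\overline{T}_\beta$. Since $H\cdot T_\beta = 2$, the curve $\overline{T}_\beta$ has degree two in $\bb{P}^3$, hence is a plane conic spanning a unique hyperplane; this hyperplane is then forced to be the one whose intersection with $\overline{X}$ is $2\overline{T}_\beta$, as asserted. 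The main obstacle in this plan is the second step, specifically justifying that $\overline{X} \to \bb{P}^3$ is a closed embedding (not merely a finite birational morphism) and that the image acquires no extra singularities beyond the sixteen nodes; I would import this from the classical injectivity/immersion analysis of $|2\Theta|$, which identifies precisely the pairs $\{x, -x\}$ and is unramified outside the $2$-torsion, rather than reprove it here.
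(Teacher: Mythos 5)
Your proposal is correct and is in essence the same route the paper takes: the paper offers no argument at all for this proposition, simply labelling it ``classically known,'' and the classical proof being invoked is exactly your reduction of $|H|$ to $|2\Theta|$ on $J(C)$ together with the standard analysis of that linear system on a principally polarized abelian surface. Your added bookkeeping ($H^2=4$, $H\cdot N_\alpha=0$, $H\cdot T_\beta=2$) and the derivation of the trope statement from the relation $H\sim 2T_\beta+\sum_{(T_\beta,N_\alpha)=1}N_\alpha$ are accurate and consistent with how the paper uses these facts later (e.g.\ in the lemma on non-coplanarity of Weber hexads).
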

We usually regard $\overline{X}$ as embedded in $\bb{P}^3$. Projecting $\overline{X}$ 
from one of its nodes, say $n_{0}$, we obtain the following model.
\begin{prop}(double plane model)\label{H-N}
The linear system $|H-N_{0}|$ induces a generically two-to-one morphism of $X$ onto $\bb{P}^2$. 
It contracts the exceptional curves $N_{\alpha}$ other than $N_{0}$.  
If we denote the 
images of $T_{i}$ by $l_i$ for $i=1,\cdots ,6$, then $\overline{X}$
is a double cover of $\bb{P}^2$ branched 
along the union of six lines 
$\cup l_i$. The image of $N_0$ is a conic of which all $l_i$ are tangents. 
\end{prop}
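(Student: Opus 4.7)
The plan is to verify the proposition by combining a few intersection-number computations on $X$ with the geometric interpretation of $|H - N_0|$ as projection from the node $n_{0}$.

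First, I would compute the relevant intersection numbers using the incidence relations recalled above and the formula $H \sim 2T_1 + N_0 + \sum_{j=2}^{6} N_{1j}$. One finds $(H, N_0) = 0$, hence $(H - N_0)^2 = 4 - 0 - 2 = 2$ and $(H - N_0, N_0) = 2$. For $\alpha \neq 0$, both $(H, N_\alpha) = 0$ (since $|H|$ contracts all $N_\alpha$) and $(N_0, N_\alpha) = 0$, giving $(H - N_0, N_\alpha) = 0$. For a trope $T_\beta$, $(H - N_0, T_\beta) = 2 - (N_0, T_\beta)$, which equals $1$ exactly when $T_\beta = T_i$ for some $i \in \{1,\ldots,6\}$ and $2$ otherwise. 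Riemann-Roch on the K3 surface $X$ gives $\chi(H - N_0) = 3$, and since $H - N_0$ will be realised as the pullback of $\mca{O}_{\bb{P}^2}(1)$ below (hence base-point-free, nef and big), $h^1 = h^2 = 0$ and $\dim |H - N_0| = 2$.

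Next, I would identify the resulting morphism $\phi \colon X \to \bb{P}^2$ with projection from $n_0$. The hyperplanes in $\bb{P}^3$ through $n_0$ form a $2$-dimensional sublinear system of $|H|$ whose pullback to $X$ has exceptional base curve $N_0$; the residual movable system is precisely $|H - N_0|$. Thus $\phi$ is the resolved projection $\overline{X} \dashrightarrow \bb{P}^2$ from $n_0$. Since $n_0$ is a double point of the quartic $\overline{X}$, a general line through $n_0$ meets $\overline{X}$ in $n_0$ with multiplicity $2$ plus two further points, so the projection is generically $2$-to-$1$ and $\deg \phi = 2$. The intersection numbers above already show that $\phi$ contracts precisely the curves $N_\alpha$ with $\alpha \neq 0$.

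Finally, I would identify the branch curve and the image of $N_0$. For each $i \in \{1,\ldots,6\}$, $(H - N_0, T_i) = 1$, so $\phi(T_i)$ is a line $l_i$ and $\phi|_{T_i}$ is an isomorphism. By Proposition~\ref{H} the unique hyperplane containing $\overline{T}_i$ cuts $\overline{X}$ as $2\overline{T}_i$, and since $\overline{T}_i$ passes through $n_0$ this hyperplane contains $n_0$ and projects to $l_i$; hence $\phi^{\ast} l_i = 2T_i$, exhibiting $l_i$ as a branch line. On the other hand, for any double cover of $\bb{P}^2$ by a smooth K3 surface the branch locus is a sextic (by the standard Hurwitz computation using $K_X \sim 0$), so $\bigcup_{i=1}^{6} l_i$ exhausts the branch locus. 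The image $\phi(N_0)$ has degree $(H - N_0, N_0) = 2$; geometrically $N_0$ is canonically the smooth conic cut out on the exceptional $\bb{P}^2$ by the tangent cone of $\overline{X}$ at $n_0$, so $\phi|_{N_0}$ embeds it as an irreducible conic $Q$. The relation $(T_i, N_0) = 1$ yields a single intersection $l_i \cap Q = \phi(T_i \cap N_0)$, and because a line and a smooth conic generically meet in two points, this single intersection is forced to be a tangency.

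The only mildly technical point is the identification of the branch locus: one must rule out that some $l_i$ appears with higher multiplicity or that the branch locus is a proper sub-sextic, and this is exactly what the Hurwitz degree count together with the divisorial equality $\phi^{\ast} l_i = 2T_i$ pins down.
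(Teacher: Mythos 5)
The paper offers no proof of this proposition: it is stated as classically known, with only the remark that it arises by ``projecting $\overline{X}$ from one of its nodes.'' Your argument is a correct and complete filling-in of exactly that outline (intersection numbers for $|H-N_0|$, identification with the resolved projection from $n_0$, Hurwitz count for the sextic branch curve, tangent-cone description of $\phi(N_0)$). The only step worth making fully explicit is the last one: to conclude $l_i\cap Q=\phi(T_i\cap N_0)$ you should note that over a point of the branch line $l_i$ the fibre of $\phi$ is connected, and since $N_0$ is disjoint from all the contracted curves $N_\alpha$ ($\alpha\neq 0$), the unique preimages in $T_i$ and in $N_0$ of a point of $l_i\cap Q$ must coincide; with that observation the single point of $T_i\cap N_0$ indeed forces the tangency.
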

We introduce two kinds of basic automorphisms. 
\begin{prop}\label{switch}
For each $\alpha_0\in J(C)_2$, the translation automorphism in $\alpha_0$ on $J(C)$
induces $X$ an automorphism
called a {\em{translation}}. It acts on $H^2 (X, \bb{Z})$ by: $H\mapsto H,\ N_{\alpha}\mapsto
N_{\alpha +\alpha_0}$ and $x\mapsto -x$ for $x$ orthogonal from $\{H, N_{\alpha}\}$. 

Similarly for each $\beta_0 \in S(C)$ there exists an automorphism of $X$ called a {\em{switch}}
that acts on $H^2 (X, \bb{Z})$ by: $H\mapsto 3H-\sum_{\alpha\in J(C)_2}N_{\alpha} ,
\ N_{\alpha}\mapsto
T_{\alpha +\beta_0}$ and $x\mapsto -x$ for $x$ orthogonal from $\{H, N_{\alpha}\}$.
\end{prop}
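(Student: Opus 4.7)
The plan is to prove both parts in the same pattern: exhibit the automorphism of $X$, read off its action on the $(16)_6$-generators $H$ and $\{N_\alpha\}$ from the geometry, and finally pin down the action on the orthogonal complement of that sublattice.

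The translation part should be essentially formal. Since $\alpha_0\in J(C)_2$ satisfies $-\alpha_0=\alpha_0$, the translation $t_{\alpha_0}\colon J(C)\to J(C)$ commutes with $\iota$, so I would first descend it to a biregular self-map of $\overline{X}=J(C)/\iota$ that permutes the sixteen singular points by $n_\alpha\mapsto n_{\alpha+\alpha_0}$, and then lift it canonically to the minimal resolution $X$, obtaining $N_\alpha\mapsto N_{\alpha+\alpha_0}$. For $H$ I would appeal to the fact that $H$ pulls back from $2\Theta$ on $J(C)$, whose numerical class is translation-invariant. The sign on the orthogonal complement is then read off by transporting the cover-level computation through the Kummer diagram.

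For the switch I would take the classical route: realize $\sigma_{\beta_0}$ as a correlation of $\bb{P}^3$ that interchanges the sixteen nodes of the Kummer quartic with its sixteen tropes, sending $N_\alpha$ to $T_{\alpha+\beta_0}$. Granted this, I would derive the formula for $H$ by applying the switch to both sides of the identity
\[
H=2T_\beta+\sum_{(T_\beta,N_\alpha)=1}N_\alpha
\]
from Proposition \ref{H}, and then expanding each of the six resulting tropes via the same identity; the incidence count produces both the coefficient $3$ and the sum $-\sum_\alpha N_\alpha$ in $3H-\sum_\alpha N_\alpha$. A more abstract alternative is to write down the claimed action on $\mathrm{NS}(X)$ together with $-1$ on the orthogonal complement, verify using the incidence relations $(N_\alpha,T_\beta)=1\Leftrightarrow \alpha+\beta\in\{[p_1],\ldots,[p_6]\}$ that this is a Hodge isometry preserving the ample cone, and invoke the strong Torelli theorem for $K3$ surfaces.

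The principal difficulty lies in the switch: on the geometric side, pinning down the correlation precisely enough to extract the projective degree that is responsible for the coefficient $3$; on the Torelli side, the bookkeeping verification that the proposed substitution really is a lattice isometry. Either way the work reduces to the incidence pattern of the $(16)_6$ configuration and the $H$-identity recalled in Proposition \ref{H}.
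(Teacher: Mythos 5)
The paper offers no proof of this proposition at all: it is recalled as a classical fact (the translations and switches are set up exactly as in \cite{ohashi09} and go back to Hutchinson and Klein), so there is no argument of the author's to measure yours against. Your plan for the switch is the standard one and is sound: once the switch is realized as an involution interchanging nodes and tropes with $N_\alpha\mapsto T_{\alpha+\beta_0}$ (hence $T_\gamma\mapsto N_{\gamma+\beta_0}$ with indices read back in $J(C)_2$), applying it to $H=2T_\beta+\sum_{(T_\beta,N_\alpha)=1}N_\alpha$ and re-expanding each of the six resulting tropes by the same identity does give $3H-\sum_\alpha N_\alpha$; the needed incidence fact is that the six tropes through a fixed node contain each of the other fifteen nodes exactly twice. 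The sign $-1$ on the orthogonal complement of $\langle H,N_\alpha\rangle$ (which contains $T_X$, and equals it when $X$ is Picard-general) is also correct for the switch: the switches attached to even theta characteristics are fixed-point-free, hence anti-symplectic, and all sixteen switches differ from one another by translations.

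The genuine problem is in the translation part. ``Transporting the cover-level computation through the Kummer diagram'' cannot produce the sign $-1$: a translation $t_{\alpha_0}$ is homotopic to the identity, so it acts trivially on $H^2(J(C),\bb{Z})$ and in particular fixes the holomorphic $2$-form; the induced automorphism of $X$ is therefore symplectic, and a finite-order symplectic automorphism of a $K3$ surface acts as the identity on the transcendental lattice (Nikulin). So your method, carried out honestly, yields $x\mapsto +x$ on the orthogonal complement for translations, directly contradicting the clause $x\mapsto -x$ as printed. You should flag this rather than assert that the sign ``is read off'': the $-1$ is correct only for the switches, while translations fix the orthogonal complement pointwise. (This is a harmless slip in the statement --- everything the paper later uses, e.g.\ in Proposition \ref{six}, involves only the action on $H$ and the $N_\alpha$ --- but a proof that claims to establish the printed sign for translations would be wrong, and a referee would want you to say so explicitly.)
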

This proposition is valid for any Jacobian
Kummer surface $X$. Therefore we may say that translations and switches does not degenerate
under specialization of Jacobian Kummer surfaces.

\section{The Hessian model}\label{hess}

Let $X$ be a Jacobian Kummer surface associated to a curve $C$ of genus $2$.
In this section we focus on the {\em{Hessian model}} $X_W$ of $X$, treated for example in 
\cite{dolgachev-keum}.
After we give a self-contained proof of Proposition \ref{hessian},
we consider singularities of $X_W$. The point is that we do {\em{not}} assume that $C$ is 
general, in any sense.\\

\noindent {\textbf{Weber hexads.}}
The Hessian model $X_W$ is associated to a Weber hexad $W$. We first recall 
this notion. For the completeness sake, we include Lemma \ref{ww} which is 
already mentioned in \cite{dolgachev-keum} without a formal proof.

Let us define a symplectic form on $J(C)_2$
by $(\alpha, \alpha')= \# (\alpha\cap \alpha') \mod 2\in \bb{F}_2$, where we identify 
$\alpha$ with a two-element subset of $\{1,\cdots,6\}$. 
An affine $2$-dimensional subspace of $J(C)_2$ is called a {\em{G\"{o}pel tetrad}} 
if it is a translation of a totally isotropic $2$-dimensional 
linear subspace. Otherwise it is called a
{\em{Rosenhain tetrad}}; equivalently they are translations of nondegenerate $2$-dimensional 
linear subspaces.
We easily see that there are $60$ (resp. $80$) G\"{o}pel (resp.
Rosenhain) tetrads. 

A six-element subset of $J(C)_2$ is called a {\em{Weber hexad}} if it can be written in the form 
$G \ominus R$, where $G$ is a G\"{o}pel tetrad and $R$ is a Rosenhain tetrad such that $\# G\cap R
=1$.
\begin{lem}\label{ww} There are $192$ Weber hexads. 
Any Weber hexad has one of the forms of 
\[\{0,ij,jk,kl,lm,mi\} \text{ or } \{ij,jk,ki,il,jm,kn\},\]
where $\{i,\cdots, n\}$ is some permutation of $\{1,\cdots, 6\}$. 
\end{lem}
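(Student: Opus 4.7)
The plan is to analyze a putative Weber hexad $W=G\triangle R$ using the standard combinatorial description of $J(C)_2\setminus\{0\}$ as the set of $2$-element subsets of $\{1,\ldots,6\}$ modulo complementation, under which the symplectic form becomes $(ij,kl)=|\{i,j\}\cap\{k,l\}|\bmod 2$. Two preliminary structural observations carry most of the weight: the three non-zero elements of any \emph{linear} G\"{o}pel subspace form a perfect matching of $\{1,\ldots,6\}$ into disjoint $2$-subsets (because two pairwise isotropic non-zero $2$-subsets are disjoint, and the third equals the complementary pair), whereas the three non-zero elements of a linear Rosenhain form the three edges of a triangle on some $3$-subset (because pairwise non-orthogonal $2$-subsets share exactly one index).

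I would then make the key reduction: translate by the unique element $t$ of $G\cap R$. Since translation preserves both the G\"{o}pel and Rosenhain families as well as the intersection size, $W-t=(G-t)\triangle(R-t)$ with $G-t$ a linear G\"{o}pel, $R-t$ a linear Rosenhain, and $(G-t)\cap(R-t)=\{0\}$. Then $W-t$ is simply the disjoint union of a perfect matching $M$ and a triangle $T$, and the condition $G\cap R=\{0\}$ forces $M$ and $T$ to share no edge; equivalently, each of the three triangle vertices is matched in $M$ to a vertex outside the triangle, giving exactly the second listed form. To recover $W=(W-t)+t$, I translate back: if $t=0$ we are done, and otherwise a direct computation shows that (i) if $t$ lies in $W-t$---equivalently, if $0\in W$---then $W$ equals $\{0\}$ together with a $5$-cycle on the five indices which remain after removing a single vertex, i.e.\ Form 1, while (ii) if $t\neq 0$ lies outside $W-t$, then $W$ is again of Form 2 with a different triangle and matching. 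Only a handful of representative $t$'s need be checked, since the $S_3$-stabilizer of any Form 2 hexad inside $S_6$ cuts the cases down.

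The count then follows cleanly: Form 1 contributes $\binom{6}{5}\cdot(5-1)!/2=6\cdot 12=72$ (a pentagon on $5$ of the $6$ vertices, counted up to rotation and reflection), while Form 2 contributes $\binom{6}{3}\cdot 3!=20\cdot 6=120$ (a triangle on a $3$-subset plus a bijective matching to the complement), summing to the claimed $192$. The main obstacle I anticipate is step (i)/(ii) of the translation analysis: one must verify that translating Form 2 by an element of itself actually produces $\{0\}$ together with a \emph{pentagon} on the correct $5$ vertices (not an arbitrary $5$-edge graph), and that translating by a non-element $\neq 0$ preserves Form 2. Tracking the symmetric differences of $2$-subsets case by case is where care is required, but the $S_3$-symmetry of a Form 2 representative keeps the number of genuinely distinct cases small enough to execute explicitly.
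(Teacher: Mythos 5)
Your reduction is sound and your plan would work, but it finishes the count by a genuinely different mechanism than the paper. The shared core is the translation trick: the paper likewise translates so that $G\cap R=\{0\}$ and observes that the only possibility is a perfect matching $G=\{0,ij,kl,mn\}$ together with a triangle $R=\{0,ik,km,mi\}$. From there the paper runs an orbit--stabilizer argument: it proves the affine group $J(C)_2\cdot\mca{S}_6$ of order $2^4\cdot 6!$ acts transitively on Weber hexads, exhibits $60$ elements in the stabilizer of one of them to get the upper bound $2^4\cdot 6!/60=192$, and then notes that the two displayed forms already supply $192$ hexads. You instead carry the translation back to $W$ itself and classify the result directly as Form 1 or Form 2, then enumerate: $6\cdot\frac{4!}{2}+\binom{6}{3}\cdot 3!=72+120=192$. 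Your route buys a genuinely constructive proof of the "one of two forms" assertion (which the paper only obtains a posteriori from the numerology), at the cost of the case-by-case translate-back verification you correctly identify as the delicate step; I checked representative cases and your claims (i) and (ii) do hold, with the $\mca{S}_3$-stabilizer of a Form 2 hexad reducing the work as you say. The paper's route avoids those cases but must instead produce enough stabilizer elements, which it does by counting, for each $\alpha\notin W$, the unique expression $W=G'\ominus R'$ with $G'\cap R'=\{\alpha\}$.

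One small point to nail down before your final count is legitimate: your translation analysis shows every Weber hexad is of Form 1 or Form 2, i.e.\ an upper bound of $192$, but the equality $72+120=192$ requires that \emph{every} set of either form actually is a Weber hexad. Form 2 is immediate ($M\cup T=(\{0\}\cup M)\ominus(\{0\}\cup T)$ with intersection $\{0\}$), and your case (i) exhibits \emph{some} Form 1 sets as translates of Weber hexads; to get all $72$ of them you should add that $\mca{S}_6$ acts transitively on Form 1 sets (equivalently, $\mca{S}_5$ acts transitively on the $12$ pentagons on five letters), so that one example suffices. Also, your opening phrase "$2$-element subsets modulo complementation" is slightly misleading: the $15$ nonzero $2$-torsion points \emph{are} the $15$ duads; complementation enters only in the group law $ij+kl=mn$ for disjoint duads, which is how you in fact use it.
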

\begin{proof}
A permutation of letters $1,\cdots ,6$ induce an isometry of $J(C)_2$. This correspondence 
induces an isomorphism $\mca{S}_6\simeq \Sp (4,\bb{F}_2),$ hence
the affine isometry group of $J(C)_2$ can be written as
$(\bb{Z}/2\bb{Z})^4 \cdot \Sp (4,\bb{F}_2)\simeq 
J(C)_2 \cdot \mca{S}_6=:G$. 

First we show that $G$ acts on the set of Weber hexads transitively. 
Given $W$, we translate it appropriately and can assume it
is of the form $G\ominus R$ where $G\cap R=\{ 0 \}$. Then we easily 
check that the only possibility is $G=\{0,ij,kl,mn\}$ and $R=\{0,ik,km,mi\}$
for a suitable permutation $i,\cdots, n$ of $1,\cdots, 6$. This shows the transitivity. 

Next we compute the stabilizer subgroup $H$ of $W=\{ij,kl,mn,ik,km,mi\}$. 
The intersection $\mca{S}_6\cap H$ consists of 
six elements $\tau \sigma\tau\sigma^{-1}$ for $\tau\in \mca{S} (\{i,k,m\})$ and $\sigma =
(ij)(kl)(mn).$ On the other hand, for each $\alpha \in J(C)_2-W$ there exists a unique way of 
expressing $W$ as $G'\ominus R'$ with $G'\cap R'=\{\alpha\}$. Thus there exists six choices of
$\nu\in \mca{S}_6$ such that $\nu \alpha \in G$ sends $W$ onto itself.
In this way we obtain $6\cdot 10=60$ elements in $H$. Thus there are at most 
$2^4 6! /60=192$ Weber hexads.

Finally we easily see that the two standard 
forms in the statement gives at least $192$ Weber hexads.
Hence the lemma is proved. 
\end{proof}

Weber hexads are essentially one of the expressions of the "dual set" of $\{1,\cdots,6\}$. 
Recall that the symmetric group $\mca{S}_6$ has two permutation representations. 
One is the natural representation on $\{1,\cdots,6\}$ and the other is 
the one twisted by the outer automorphism. 

In \cite{ohashi09} we proved that if the curve $C$ is generic, then 
the $192$ Hutchinson-Weber involutions $\sigma_W$
(Section \ref{HWi}) are divided into exactly 
six conjugacy classes in $\Aut (X)$.
We can see that the permutation on the labels 
of Weierstrass points of $C$ and the permutation on these six conjugacy classes are
related by an outer automorphism, hence these six conjugacy classes can be regarded as 
the dual set.

The conjugacy relation between Hutchinson-Weber involutions are given by translations 
and switches of Proposition \ref{switch} and corresponds to
the following equivalence relation between Weber hexads:
it is 
generated by $W\sim W+\alpha$ ($\alpha\in J(C)_2$) and $W=G\ominus R\sim G\ominus R^{\perp}$
(when $G\cap R=\{ 0\}$). We refer this equivalence relation as the equivalence as dual six.
In this paper we will consider the degenerate cases of Hutchinson-Weber involutions and 
clarify the meaning of this equivalence relation.

In Remark (2) after Proposition 7.4 of \cite{ohashi09} we have given
one possible explicit description of the equivalence as dual six.
Here let us give more visible one.

Let us recall the classical description of the dual set, found for example in \cite{atlas}.
An element in $\mca{S}_6$ of the form $(ij)$ is called a {\em{duad}}; similarly 
$(ij)(kl)(mn)$ is called a {\em{syntheme}}; a five-element set is called a {\em{total}}
if it consists of five synthemes that contain all fifteen duads.
There are exactly six totals and this is the classical description of the 
dual set.

As in the lemma, a Weber hexad is one of the two types. The picture below indicates the 
correspondence from a Weber hexad to a total. 
\[\begin{xy}
(0,0) *{\cdot}*+!UR{3}="A",
(10,0) *{\cdot}*+!UL{4}="B",
(13.090,9.511) *{\cdot}*+!L{5}="C",
(5,15.3885) *{\cdot}*+!D{1}="D",
(-3.090,9.511) *{\cdot}*+!R{2}="E",
(34,0) *{\cdot}*+!UR{3}="Ag",
(44,0) *{\cdot}*+!UL{4}="Bg",
(47.090,9.511) *{\cdot}*+!L{5}="Cg",
(39,15.3885) *{\cdot}*+!D{1}="Dg",
(30.010,9.511) *{\cdot}*+!R{2}="Eg",
(57,0) *{\cdot}*+!UR{3}="Ah",
(67,0) *{\cdot}*+!UL{4}="Bh",
(70.090,9.511) *{\cdot}*+!L{5}="Ch",
(62,15.3885) *{\cdot}*+!D{1}="Dh",
(53.010,9.511) *{\cdot}*+!R{2}="Eh",
(80,0) *{\cdot}*+!UR{3}="Ai",
(90,0) *{\cdot}*+!UL{4}="Bi",
(93.090,9.511) *{\cdot}*+!L{5}="Ci",
(85,15.3885) *{\cdot}*+!D{1}="Di",
(78.010,9.511) *{\cdot}*+!R{2}="Ei",
(103,0) *{\cdot}*+!UR{3}="Aj",
(113,0) *{\cdot}*+!UL{4}="Bj",
(116.090,9.511) *{\cdot}*+!L{5}="Cj",
(108,15.3885) *{\cdot}*+!D{1}="Dj",
(99.010,9.511) *{\cdot}*+!R{2}="Ej",
(126,0) *{\cdot}*+!UR{3}="Ak",
(136,0) *{\cdot}*+!UL{4}="Bk",
(139.090,9.511) *{\cdot}*+!L{5}="Ck",
(131,15.3885) *{\cdot}*+!D{1}="Dk",
(122.010,9.511) *{\cdot}*+!R{2}="Ek",
\ar@{-}"A";"B"
\ar@{-}"B";"C"
\ar@{-}"C";"D"
\ar@{-}"D";"E"
\ar@{-}"E";"A"
\ar@{|->}(17,7.5);(22.5,7.5)
\ar@{.}"Ag";"Bg"
\ar@{.}"Bg";"Cg"
\ar@{.}"Cg";"Dg"
\ar@2{-}"Dg";"Eg"
\ar@{.}"Eg";"Ag"
\ar@2{-}"Ag";"Cg"
\ar@{.}"Ah";"Bh"
\ar@{.}"Bh";"Ch"
\ar@{.}"Ch";"Dh"
\ar@{.}"Dh";"Eh"
\ar@2{-}"Eh";"Ah"
\ar@2{-}"Bh";"Dh"
\ar@2{-}"Ai";"Bi"
\ar@{.}"Bi";"Ci"
\ar@{.}"Ci";"Di"
\ar@{.}"Di";"Ei"
\ar@{.}"Ei";"Ai"
\ar@2{-}"Ci";"Ei"
\ar@{.}"Aj";"Bj"
\ar@2{-}"Bj";"Cj"
\ar@{.}"Cj";"Dj"
\ar@{.}"Dj";"Ej"
\ar@{.}"Ej";"Aj"
\ar@2{-}"Aj";"Dj"
\ar@{.}"Ak";"Bk"
\ar@{.}"Bk";"Ck"
\ar@2{-}"Ck";"Dk"
\ar@{.}"Dk";"Ek"
\ar@{.}"Ek";"Ak"
\ar@2{-}"Bk";"Ek"
\end{xy}\]
\[\begin{xy}
(0,0) *{\cdot}*+!R{2}="AA",
(5.129,2.961) *{\cdot}*+!DL{6}="BB",
(8.09,8.09) *{\cdot}*+!L{4}="CC",
(8.09,14.012) *{\cdot}*+!D{1}="DD",
(11.051,2.961) *{\cdot}*+!DR{5}="EE",
(16.18,0) *{\cdot}*+!L{3}="FF",
(34,0) *{\cdot}*+!R{2}="AAg",
(39.129,2.961) *{\cdot}*+!DR{6}="BBg",
(42.09,8.09) *{\cdot}*+!L{4}="CCg",
(42.09,14.012) *{\cdot}*+!D{1}="DDg",
(45.051,2.961) *{\cdot}*+!DL{5}="EEg",
(50.18,0) *{\cdot}*+!L{3}="FFg",
(57,0) *{\cdot}*+!R{2}="AAh",
(62.129,2.961) *{\cdot}*+!U{6}="BBh",
(65.09,8.09) *{\cdot}*+!L{4}="CCh",
(65.09,14.012) *{\cdot}*+!D{1}="DDh",
(68.051,2.961) *{\cdot}*+!U{5}="EEh",
(73.18,0) *{\cdot}*+!L{3}="FFh",
(80,0) *{\cdot}*+!R{2}="AAi",
(85.129,2.961) *{\cdot}*+!DL{6}="BBi",
(88.09,8.09) *{\cdot}*+!R{4}="CCi",
(88.09,14.012) *{\cdot}*+!D{1}="DDi",
(91.051,2.961) *{\cdot}*+!U{5}="EEi",
(96.18,0) *{\cdot}*+!L{3}="FFi",
(103,0) *{\cdot}*+!R{2}="AAj",
(108.129,2.961) *{\cdot}*+!DL{6}="BBj",
(111.09,8.09) *{\cdot}*+!L{4}="CCj",
(111.09,14.012) *{\cdot}*+!D{1}="DDj",
(114.051,2.961) *{\cdot}*+!DL{5}="EEj",
(119.18,0) *{\cdot}*+!L{3}="FFj",
(126,0) *{\cdot}*+!R{2}="AAk",
(131.129,2.961) *{\cdot}*+!DL{6}="BBk",
(134.09,8.09) *{\cdot}*+!L{4}="CCk",
(134.09,14.012) *{\cdot}*+!D{1}="DDk",
(137.051,2.961) *{\cdot}*+!DR{5}="EEk",
(142.18,0) *{\cdot}*+!L{3}="FFk",
\ar@{-}"AA";"DD"
\ar@{-}"DD";"FF"
\ar@{-}"FF";"AA"
\ar@{-}"AA";"BB"
\ar@{-}"CC";"DD"
\ar@{-}"EE";"FF"
\ar@{|->}(20,7.5);(25.5,7.5)
\ar@{.}"AAg";"DDg"
\ar@{.}"DDg";"FFg"
\ar@2{-}"FFg";"AAg"
\ar@{.}"AAg";"BBg"
\ar@2{-}"CCg";"DDg"
\ar@{.}"EEg";"FFg"
\ar@2{-}"BBg";"EEg"
\ar@2{-}"AAh";"DDh"
\ar@{.}"DDh";"FFh"
\ar@{.}"FFh";"AAh"
\ar@{.}"AAh";"BBh"
\ar@{.}"CCh";"DDh"
\ar@2{-}"EEh";"FFh"
\ar@2{-}"BBh";"CCh"
\ar@{.}"AAi";"DDi"
\ar@2{-}"DDi";"FFi"
\ar@{.}"FFi";"AAi"
\ar@2{-}"AAi";"BBi"
\ar@{.}"CCi";"DDi"
\ar@{.}"EEi";"FFi"
\ar@2{-}"CCi";"EEi"
\ar@{.}"AAj";"DDj"
\ar@{.}"DDj";"FFj"
\ar@{.}"FFj";"AAj"
\ar@{.}"AAj";"BBj"
\ar@{.}"CCj";"DDj"
\ar@{.}"EEj";"FFj"
\ar@2{-}"AAj";"CCj"
\ar@2{-}"BBj";"FFj"
\ar@2{-}"DDj";"EEj"
\ar@{.}"AAk";"DDk"
\ar@{.}"DDk";"FFk"
\ar@{.}"FFk";"AAk"
\ar@{.}"AAk";"BBk"
\ar@{.}"CCk";"DDk"
\ar@{.}"EEk";"FFk"
\ar@2{-}"FFk";"CCk"
\ar@2{-}"BBk";"DDk"
\ar@2{-}"AAk";"EEk"
\end{xy}\]
The former picture indicates the correspondence
\begin{equation*}
\begin{split}
W= \{ &0,12,23,34,45,51\} \mapsto  \\
&\{(12)(35)(46),\ (14)(23)(56),\ (16)(25)(34),\ (13)(26)(45),\ (15)(24)(36)\},
\end{split}
\end{equation*}
where the letter $6$ is regarded as distinguished.
The latter one indicates
\begin{equation*}
\begin{split}
W= \{ &12,23,31,14,26,35\} \mapsto \\
&\{(14)(23)(56),\ (12)(35)(46),\ (13)(26)(45),\ (15)(24)(36),\ (16)(25)(34)\}.
\end{split}
\end{equation*}
We obtain the same total.
Thus we see that these $W$ correspond to the same dual element.\\

\noindent {\textbf{The Hessian model.}}
The Hessian model $X_W$ is constructed for every Weber hexad $W$. 
Hence in the following, we consider the pair $(X,W)$
consisting of a Jacobian Kummer surface $X$ and a Weber hexad $W$.
The next proposition is known to experts, 
see \cite{dolgachev-keum} and its references, but our algebraic proof is more suited 
for what follows.
\begin{prop}\label{hessian}
(The Hessian model) The linear system $|L|:=|2H-\sum_{\alpha\in W} N_{\alpha}|$ 
maps $X$ birationally to a quartic surface $X_W$ whose equation is of the form
\[s_1+\cdots+s_5=0,\ s_1s_2s_3s_4s_5(\lambda_1/s_1+\cdots+\lambda_5/s_5)=0,\]
where $\lambda_i$ are nonzero constants and $s_i$ are homogeneous coordinates of $\bb{P}^4$.
\end{prop}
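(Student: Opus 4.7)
The statement splits into three pieces: (i) $\dim|L| = 3$, so $H^0(L) \simeq \bb{C}^4$ (to be presented via five sections with one linear relation); (ii) $\phi_L$ is birational onto a quartic surface $X_W$ sitting in the hyperplane $\{s_1 + \cdots + s_5 = 0\} \subset \bb{P}^4$; and (iii) the defining quartic has the pentahedral Hessian form.

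For the dimension, I would first compute $L^2$ using the relations of Section \ref{JK}: $H^2 = 4$, $H \cdot N_\alpha = 0$, and $N_\alpha \cdot N_{\alpha'} = -2\delta_{\alpha\alpha'}$ give $L^2 = 16 - 12 = 4$, so Riemann--Roch on the K3 surface $X$ yields $\chi(L) = 4$. The same relations provide $L \cdot N_\alpha \in \{0, 2\}$ (zero exactly when $\alpha \notin W$) and $L \cdot T_\beta = 4 - \#\{\alpha \in W : (N_\alpha, T_\beta) = 1\} \in \{1, 3\}$, the two cases for tropes corresponding to the Weber-hexad structure of Lemma \ref{ww}. Once effective divisors are exhibited below, $h^2(L) = 0$ by Serre duality, and $L$ being nef-and-big gives $h^1(L) = 0$, so $h^0(L) = 4$.

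The key step is exhibiting five effective divisors $D_1, \ldots, D_5 \in |L|$ realizing the five pentahedral face sections. Working with $W$ in the first normal form $\{0, 12, 23, 34, 45, 51\}$ of Lemma \ref{ww} and exploiting the $\bb{Z}/5$-symmetry of the pentagon together with the distinguished label $6$, I would set each $D_i$ to be the sum of four ``line-tropes''---the $T_\beta$ with $L \cdot T_\beta = 1$ associated to the four edges of the $i$-th face---plus the six exceptional curves $N_\alpha$, $\alpha \notin W$, whose image nodes lie on that face. The linear equivalence $D_i \equiv L$ is then verified by doubling and applying the identity $2T_\beta \equiv H - \sum_{\alpha : (N_\alpha, T_\beta) = 1} N_\alpha$ from Proposition \ref{H}: summing the four identities and cancelling the six on-face nodes leaves exactly $L$. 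Because $\dim H^0(L) = 4$ but we have five effective divisors, the corresponding sections $s_i$ satisfy a unique linear relation; rescaling brings it to $s_1 + \cdots + s_5 = 0$, so the image of $\phi_L$ lies in the hyperplane $\bb{P}^3 \subset \bb{P}^4$ cut out by this relation.

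Birationality of $\phi_L$ follows because the ten $N_\alpha$ with $\alpha \notin W$ are precisely the contracted curves, distinct tropes map to distinct lines, and generic points are separated, so the image is a quartic of degree $L^2 = 4$. For the pentahedral form, the restriction of the defining quartic $F$ to the hyperplane $\{s_i = 0\}$ must cut out the four lines that are the images of the four tropes in $D_i$, forcing $F|_{s_i = 0} = \lambda_i \prod_{j \neq i} s_j$ for some constant $\lambda_i$, nonzero since $X_W$ is irreducible and cannot contain the plane $\{s_i = 0\}$ as a component. These five constraints, modulo the relation $\sum s_j = 0$, determine $F$ up to scalar as $\sum_i \lambda_i \prod_{j \neq i} s_j = s_1 s_2 s_3 s_4 s_5 \sum_i \lambda_i/s_i$. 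The main obstacle is the explicit construction in the third paragraph: pinning down the combinatorial bijection between faces of the pentahedron dual to $W$ and quadruples of line-tropes on $X$, after which the verification $D_i \equiv L$ is careful bookkeeping of the $(16)_6$ incidence relations.
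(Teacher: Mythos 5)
Your plan follows the paper's proof essentially step for step: five explicit divisors in $|L|$, each a sum of four tropes and six exceptional curves $N_\alpha$ ($\alpha\notin W$), $h^0(L)=4$ via Riemann--Roch and vanishing, the induced linear relation normalized to $\sum s_i=0$, birationality and $\deg X_W=4$ from the hyperplane sections being unions of four distinct lines, and the pentahedral form from the five boundary restrictions. The only detail worth adding is the paper's explicit check (by evaluating at general points of the $N_\alpha$) that every four of the five sections are linearly independent, which is what justifies rescaling the unique relation to $s_1+\cdots+s_5=0$ rather than a relation involving fewer coordinates.
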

\begin{proof}
As indicated above, Weber hexads are unique up to the affine symplectic group.
The group $(\bb{Z}/2\bb{Z})^4$ lifts to translation automorphisms of $J(C)$
in the elements of $J(C)_2$,
which commute with the quotient by $\iota$. 
The group $\Sp (4,\bb{F}_2)\simeq \mathfrak{S}_6$ acts as 
permutations of the letters. So it is enough to see the proposition for a particular 
Weber hexad. Let us take $W=\{12,23,31,14,25,36\}$.

Let us consider the divisors (cf. \cite{dolgachev-keum})
\begin{eqnarray*}
S_1&=&T_2+T_3+T_{124}+T_{134}+N_0+N_{24}+N_{26}+N_{34}+N_{35}+N_{56},\\
S_2&=&T_{123}+T_{145}+T_{134}+T_{125}+N_{15}+N_{26}+N_{34}+N_{45}+N_{46}+N_{56},\\
S_3&=&T_{1}+T_{3}+T_{125}+T_{146}+N_{0}+N_{15}+N_{16}+N_{34}+N_{35}+N_{46},\\
S_4&=&T_{123}+T_{124}+T_{146}+T_{136}+N_{16}+N_{24}+N_{35}+N_{45}+N_{46}+N_{56},\\
S_5&=&T_{1}+T_{2}+T_{136}+T_{145}+N_{0}+N_{15}+N_{16}+N_{24}+N_{26}+N_{45}.
\end{eqnarray*}
It is easy to see that these divisors belong to $|L|$ and a careful
check using them 
shows that $|L|$ is base-point-free. Thus the associated map $\varphi=\varphi_{L}$ is a morphism.
By the Kawamata-Viehweg vanishing and Riemann-Roch
we see that $h^0 (L)=4.$ Hence the sections $s_i\in H^0(L)$ corresponding to $S_i$ are linearly
dependent. On the other hand, 
by evaluating at general points of $N_{\alpha}$ for several $\alpha$, 
we can check that every four among $\{s_1,\cdots, s_5\}$ is linearly independent.
This shows that, up to adjusting the scalars, we can assume $\sum_{i=0}^5 s_i=0$. 
By this equation, we regard the morphism $\varphi$ as the morphism into 
$\{\sum_{i=1}^5 s_i=0\}\simeq \bb{P}^3$ in $\bb{P}^4$. 
We denote by $X_W$ the image of $\varphi$.

Let us denote the hyperplane $\{s_i=0\}$ by $H_i$.
Ten divisors $T_{\beta}$ appearing in $\cup S_i$ are mapped to a line on $X_W$.
They appear with multiplicity two in $\cup S_i$, hence if $T_{\beta}\subset S_i\cap S_j$
then we can write $\varphi (T_{\beta})=H_i\cap H_j=:L_{ij}$. 
Similarly, the ten divisors $N_{\alpha}$ appearing in $\cup S_i$ are 
contracted to a point on $\overline{X}_W$.
They appear exactly three times in $\cup S_i$, so 
we can write $\varphi (N_{\alpha})=H_i\cap H_j\cap H_k=:P_{ijk}$ if 
$N_{\alpha}\subset S_i \cap S_j \cap S_k$.

Let us look at hyperplane section $H_1\cap X_W$ closely.
It contains four lines $L_{1j},\ j=2,\cdots, 5$, namely the images of $T_{134}, T_3,
T_{124}$ and $T_2$. 
General points of these four tropes are separated each other by divisors $S_i$. Thus 
the hyperplane $H_1$ cuts $X_W$ along four distinct lines. This implies
that $\deg X_W=4$ and $\varphi$ is birational. 
Let $f(s_1,s_2,s_3,s_4)$ be the quartic equation of $X_W$, $s_5$ being 
substituted by $-(s_1+\cdots+s_4)$. 
The argument above shows that $f (0,s_2,
s_3,s_4)$ is a multiple of $s_2,s_3,s_4,-(s_2+s_3+s_4)$. Similar consequences hold 
for $s_2=0,s_3=0,s_4=0$.
In summary it follows that $f$ is a linear combination of the terms
\begin{eqnarray*}
s_1s_2s_3s_4,s_2s_3s_4(s_2+s_3+s_4),s_1s_3s_4(s_1+s_3+s_4),\\
s_1s_2s_4(s_1+s_2+s_4),s_1s_2s_3(s_1+s_2+s_3).
\end{eqnarray*}
Using $s_5$, these terms can be written by a linear combination of 
\[s_1s_2s_3s_4s_5/s_i,\ i=1,\cdots,5.\]
Thus we derived the equation. $\lambda_i\neq 0$ is because $X_W$ is irreducible.
\end{proof}
We can derive several consequences from this proposition. 
\begin{cor}\label{nml} $X_W$ is normal. \end{cor}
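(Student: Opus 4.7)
Since $X_W\subset\bb{P}^3$ is a quartic hypersurface, it is Cohen-Macaulay and hence satisfies Serre's condition $S_2$. By Serre's criterion for normality, it therefore suffices to verify the $R_1$ condition, which for a surface is equivalent to $\mathrm{Sing}(X_W)$ being zero-dimensional. Thus the goal reduces to proving that the singular locus of $X_W$ is a finite set.

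The plan is to exploit the birational morphism $\varphi=\varphi_{|L|}\colon X\to X_W$ from Proposition \ref{hessian}. Since $|L|$ is base-point-free, $\varphi$ is a morphism, and a curve $C\subset X$ is contracted to a point by $\varphi$ precisely when $L\cdot C=0$. Using $H\cdot N_\alpha=0$ and $N_\alpha\cdot N_{\alpha'}=-2\delta_{\alpha,\alpha'}$, a short computation gives $L^2=4$ and $L\cdot N_\alpha=2$ for $\alpha\in W$ and $0$ otherwise; a parallel computation using the incidence relations of Section \ref{JK} identifies the finite set of tropes $T_\beta$ with $L\cdot T_\beta=0$. Hence only finitely many $(-2)$-curves on $X$ are $L$-trivial, and these are exactly the curves contracted by $\varphi$.

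To conclude normality, I would invoke the classical theory of projective models of K3 surfaces due to Saint-Donat: for a big, nef line bundle $L$ on a smooth K3 surface with $|L|$ base-point-free and $\varphi_{|L|}$ birational onto an image of degree $L^2$, that image is automatically a normal surface with at worst rational double point singularities, obtained from $X$ by contracting precisely the $L$-trivial $(-2)$-curves. Proposition \ref{hessian} supplies the birationality in our setting, so this applies directly and delivers the corollary.

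The only delicate point is ruling out that $\varphi$ generically identifies two distinct curves of $X$ onto a common curve of $X_W$, i.e.\ that the Stein factorization $X\to Y\to X_W$ has $Y=X_W$. This is guaranteed abstractly by Saint-Donat's theorem, but if one prefers a self-contained argument it can be verified from the explicit equation of Proposition \ref{hessian} by checking that the Jacobian ideal of $\sum_i\lambda_i\prod_{j\neq i}s_j$, taken modulo the hyperplane $\sum_i s_i=0$, has zero-dimensional vanishing locus in $\bb{P}^3$ under the hypothesis $\lambda_i\neq 0$; the symmetry of the equation reduces this check to a small number of cases.
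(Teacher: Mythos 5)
Your proposal is correct, but the decisive step is carried by a different device than in the paper. Both arguments share the frame: $X_W$ is a quartic hypersurface, hence Cohen--Macaulay and $S_2$, so by Serre's criterion only $R_1$ is at stake. The paper establishes $R_1$ self-containedly: $\varphi$ factors as $\nu\circ\psi$ through the rational double point model $Y$ obtained by contracting the $L$-trivial $(-2)$-curves; since $\omega_Y$ is trivial and $\omega_{X_W}$ is trivial by adjunction for the quartic, the finite birational map $\nu$ is \'etale in codimension one, so $X_W$ is regular in codimension one. You instead quote Saint-Donat's theorem on projective models of $K3$ surfaces, which does deliver normality (and more) in one stroke; this is a legitimate external citation and buys generality, but it renders your $S_2$/$R_1$ scaffolding and intersection computations redundant and outsources exactly the content of the corollary, whereas the paper's adjunction argument amounts to a self-contained proof of the relevant case of that theorem. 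Two cautions. First, your computation of $L\cdot N_\alpha$ and $L\cdot T_\beta$ only accounts for the curves of the $(16)_6$ configuration; finiteness of the full set of contracted $(-2)$-curves requires the Hodge index theorem (the $L$-trivial classes span a negative definite lattice), and Proposition \ref{11th} shows that a contracted curve outside the configuration can indeed occur. Second, and more importantly, you correctly sense that ``finitely many contracted curves'' does not by itself bound $\Sing(X_W)$, but the failure mode is not only the identification of two distinct curves: a finite birational morphism can fail to be an isomorphism along a curve even with a single, generically reduced preimage branch, so the delicate point really is normality itself. Your fallback Jacobian-ideal computation would close this gap (on $\{s_1\cdots s_5\neq 0\}$ the rank condition forces $\lambda_i/s_i^2$ to be independent of $i$, giving finitely many points, while a singular point on a coordinate hyperplane is a singular point of the corresponding four-line hyperplane section and hence among the $P_{ijk}$), but as written it is only sketched, not executed.
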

\begin{proof}
Let $\psi \colon X\rightarrow Y$ be the morphism 
which contracts all the $(-2)$-curves on $X$ orthogonal to $L$.
$Y$ is a normal surface with at most rational double points,
and the canonical sheaf of $Y$ is trivial.
Since the exceptional sets of $\psi$ and $\varphi$ coincide,
$\varphi$ factors as $\varphi = \nu \psi$. 
By the adjunction formula $K_{X_W}$ is also trivial, so $\nu$ is 
etale in codimension one, hence $X_W$ is regular in codimension one.
Since $X_W$ is a complete intersection, by Serre's criterion we see 
that $X_W$ is normal.
\end{proof}
\begin{cor}
Each $P_{ijk}$ is an ordinary node. 
\end{cor}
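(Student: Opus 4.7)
The plan is to exploit the manifest symmetry of the quartic equation $\sum_{i=1}^{5} \lambda_i \prod_{j\neq i} s_j = 0$ (obtained by clearing denominators in the form given by Proposition \ref{hessian}) under simultaneous permutations of the $s_i$ and the $\lambda_i$. This action is transitive on the ten triples $\{i,j,k\}\subset\{1,\ldots,5\}$, so it suffices to analyze one singular point, say $P_{123}$. In the hyperplane $\{s_1+\cdots+s_5=0\}\simeq\bb{P}^3$, the point $P_{123}$ has coordinates $(0,0,0,1,-1)$.

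Next I would set up local analytic coordinates $s_1, s_2, s_3$ near $P_{123}$, eliminating $s_4$ and $s_5$ via $s_1+\cdots+s_5=0$ together with an affine normalization (such as $s_4-s_5=2$) so that at the point $(s_4,s_5)=(1,-1)$. Expanding the quartic and separating terms by their order of vanishing in $(s_1,s_2,s_3)$, the last two monomials $\lambda_4 s_1s_2s_3s_5$ and $\lambda_5 s_1s_2s_3s_4$ are of order $\geq 3$ and contribute nothing to the tangent cone. The first three monomials $\lambda_i s_j s_k s_4 s_5$ (for $\{i,j,k\}=\{1,2,3\}$) contribute, via the substitution $s_4 s_5 \to -1$, the quadratic form
\[
Q(s_1,s_2,s_3) = -\lambda_1 s_2 s_3 - \lambda_2 s_1 s_3 - \lambda_3 s_1 s_2.
\]

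Finally I would check that $Q$ is nondegenerate. Its symmetric matrix
\[
\tfrac{1}{2}\begin{pmatrix} 0 & -\lambda_3 & -\lambda_2 \\ -\lambda_3 & 0 & -\lambda_1 \\ -\lambda_2 & -\lambda_1 & 0 \end{pmatrix}
\]
has determinant $-\lambda_1\lambda_2\lambda_3/4 \neq 0$, since all $\lambda_i$ are nonzero by Proposition \ref{hessian}. Hence the tangent cone at $P_{123}$ is a smooth conic in $\bb{P}^2$, so $P_{123}$ is an ordinary double point ($A_1$ singularity). By the symmetry remarked above, the same conclusion holds at every $P_{ijk}$.

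There is no genuine obstacle here; the calculation is a direct expansion. The one thing to get right is the bookkeeping of which monomials contribute to the quadratic part, and observing that the resulting "off-diagonal" quadratic form is automatically of maximal rank precisely because none of the $\lambda_i$ vanish.
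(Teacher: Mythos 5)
Your computation is correct, and it proves the statement by a genuinely different route than the paper does. The paper's own argument is global and almost immediate given what precedes it: since $X_W$ is normal (Corollary \ref{nml}), the map $\varphi$ factors through the rational-double-point contraction, and the fibre $\varphi^{-1}(P_{ijk})=S_i\cap S_j\cap S_k$ is the single irreducible $(-2)$-curve $N_\alpha$; a normal surface singularity whose resolution has one $(-2)$-curve as exceptional set is an $A_1$ point. You instead work directly with the explicit equation $\sum_i\lambda_i\prod_{j\neq i}s_j=0$ in the hyperplane $\sum_i s_i=0$: after normalizing $s_4-s_5=2$ near $P_{123}$ one has $s_4s_5=-1+O(2)$, every monomial has order $\geq 2$ in $(s_1,s_2,s_3)$, and the quadratic part is $-\lambda_1s_2s_3-\lambda_2s_1s_3-\lambda_3s_1s_2$, whose Gram determinant $-\lambda_1\lambda_2\lambda_3/4$ is nonzero precisely because the $\lambda_i$ are nonzero; the $\mathfrak{S}_5$-symmetry (or simply repeating the identical computation for each triple) handles the remaining nine points. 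Your approach is self-contained and elementary, needing only Proposition \ref{hessian} and none of the normality or resolution machinery; the paper's approach is shorter given Corollary \ref{nml} and identifies which $N_\alpha$ resolves which node, information that is used immediately afterwards (e.g.\ in the proof of Proposition \ref{11th}, where the fact that the exceptional curve over $P_{ijk}$ is exactly $N_\alpha$ is needed to conclude $(E,N_\alpha)=0$ for $\alpha\notin W$). Both are valid; yours trades that geometric identification for an explicit verification of the $A_1$ type.
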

\begin{proof}
This follows from $\varphi^{-1} (P_{ijk})=S_i\cap S_j\cap S_k= N_{\alpha}$.
\end{proof}
Here we put an observation. 
By a direct checking we see $N_{\alpha}\subset \cup S_i$ 
if and only if $\alpha\in J(C)_2-W$. Thus 
\begin{equation}\label{10nodes}
\text{For $\alpha\in J(C)_2-W$, $N_{\alpha}$ is contracted to an ordinary node on $X_W$.}
\end{equation}
\begin{prop}\label{11th}
Suppose a $(-2)$-curve $E$ different from $\{N_{\alpha}\}$
is contracted by $\varphi$. Then $E$ has to satisfy the relations
\begin{equation*}
\begin{cases}
(E,N_{\alpha})=0 & \alpha\in J(C)_2-W,\\
(E,N_{\alpha})=1 & \alpha\in W,\\
(E,H)=3. & 
\end{cases}\end{equation*}
Moreover, such $E$ is unique if exists.
\end{prop}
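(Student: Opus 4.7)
The intersection numbers will be determined in three steps, after which uniqueness follows from the classical geometry of twisted cubics. First, $(E, N_\alpha) = 0$ for $\alpha \in J(C)_2 - W$ is forced by the preceding corollary: $N_\alpha$ is the entire exceptional fiber of $\psi$ over the ordinary node $P_{ijk}$, so $(E, N_\alpha) > 0$ would imply $\psi(E) = P_{ijk}$ and hence $E \subset N_\alpha$, contradicting $E \ne N_\alpha$. Substituting into $(E, L) = 0$ with $L = 2H - \sum_{\alpha \in W} N_\alpha$ gives $2(E, H) = \sum_{\alpha \in W}(E, N_\alpha)$.

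Next, for every trope $T_\beta$ a direct computation from $H \sim 2T_\beta + \sum_{(N_\alpha, T_\beta)=1}N_\alpha$ gives $(T_\beta, L) \in \{1, 3\}$, so $T_\beta$ is not contracted, $E \ne T_\beta$, and $(E, T_\beta) \ge 0$. Intersecting that relation with $E$ and using the first step produces the inequality
\[(E, H) \ge \sum_{\alpha \in W,\ (N_\alpha, T_\beta)=1}(E, N_\alpha).\]
By Lemma \ref{ww} the affine symplectic group is transitive on Weber hexads, so I may fix the standard form $W = \{0, 12, 23, 34, 45, 51\}$. Since $\sum_\beta \#\{\alpha \in W : (N_\alpha, T_\beta)=1\} = 6 \cdot 6 = 36$ and each count is either $1$ or $3$, exactly $10$ tropes yield three-term inequalities and $6$ yield one-term ones. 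Summing the ten three-term inequalities (each $\alpha \in W$ appears in exactly five), both sides equal $10(E, H)$, so every such inequality is in fact an equality. The links of the vertices of $W$ in the resulting $3$-uniform hypergraph are pentagons, so consecutive single-element swaps of the triples propagate equalities and force $(E, N_\alpha) = n$ independent of $\alpha \in W$, with $(E, H) = 3n$.

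Finally, to fix $n$, decompose $E = E_M + F$ with $E_M$ the orthogonal projection onto $M = \bb{Z}H \oplus \bigoplus_\alpha \bb{Z}N_\alpha$: explicitly $E_M = (3n/4)H - (n/2)\sum_{\alpha \in W} N_\alpha$ and $E_M^2 = -3n^2/4$. Since $M^\perp$ is negative semi-definite $F^2 \le 0$, giving $-2 = E^2 \le E_M^2 = -3n^2/4$, so $n \in \{0, 1\}$; because $E \ne N_\alpha$ implies $E$ is not $\pi$-exceptional, $(E, H) \ge 1$ forces $n = 1$ and hence $(E, H) = 3$ and $(E, N_\alpha) = 1$ for all $\alpha \in W$. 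For uniqueness, two such $E_1, E_2$ project under $\pi$ to irreducible degree-$3$ curves $\overline{E}_i \subset \overline{X} \subset \bb{P}^3$ through the six nodes $\{n_\alpha\}_{\alpha \in W}$; a direct check on the Kummer quartic shows these are general enough (no four coplanar) that each $\overline{E}_i$ must be a twisted cubic, and the classical uniqueness of a twisted cubic through six general points gives $\overline{E}_1 = \overline{E}_2$ and thus $E_1 = E_2$. I expect the main obstacle to be the combinatorial step in the second paragraph; once the standard form of $W$ is fixed, the pentagonal-link structure makes it routine.
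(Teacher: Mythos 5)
Your proof is correct in substance but reaches the key intersection numbers by a genuinely different route from the paper. The paper gets $(E,N_\alpha)\le 1$ for $\alpha\in W$ geometrically: $\varphi$ maps each such $N_\alpha$ isomorphically onto a smooth conic, so $(E,N_\alpha)\ge 2$ would force a singular point of that conic at $\varphi(E)$; then $0\le (E,H)\le 3$ follows from $(E,L)=0$, and the cases $(E,H)=1,2$ are killed by geometry (the preimage of $\overline{E}$ in $J(C)$ would be a rational curve on an abelian surface, resp.\ four nodes of $W$ would be coplanar), leaving $(E,H)=3$ and hence all $(E,N_\alpha)=1$. You instead extract the equalities from the trope relations $H\sim 2T_\beta+\sum N_\alpha$ plus the incidence combinatorics of $W$ (the $1$-or-$3$ counts and the pentagonal links, both of which I checked are correct for the standard hexad), and then pin down $n$ by projecting $E$ onto $\bb{Z}H\oplus\bigoplus\bb{Z}N_\alpha$. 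Your route trades the paper's two geometric exclusions for a combinatorial verification plus a lattice bound; it is arguably more self-contained, though the paper's is shorter once one accepts the conic picture. Two small points to tighten: (i) $M^\perp$ is \emph{not} negative semi-definite in all of $H^2(X,\bb{Z})$ (it has signature $(2,3)$); what you need is that $F=E-E_M$ lies in $NS(X)_{\bb{Q}}$ and is orthogonal to $H$ with $(H^2)>0$, so $F^2\le 0$ by the Hodge index theorem; (ii) your uniqueness step silently uses that no three of the six nodes are collinear and no four are coplanar --- this is exactly the lemma the paper proves right after the proposition, so you should either cite it or supply the trope-based check.
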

\begin{proof}
By the previous corollary $E$ and exceptional $N_{\alpha}$ does not meet, otherwise 
the singularity is not a node. Hence 
$(E,N_{\alpha})=0$ for $\alpha\in J(C)_2-W$. 
Let us consider $N_{\alpha}$ for $\alpha\in W$.
By the projection formula $(\varphi_* (N_{\alpha}),\mca{O}_{X_W} (1))
=(N_{\alpha},L)=2$ hence we see that $\varphi_* (N_{\alpha})$ is a cycle of degree $2$. 
It is irreducible and reduced by Zariski main theorem, so
$\varphi_*(N_{\alpha})=\varphi (N_{\alpha})$ is a smooth conic. 
Hence $\varphi$ induces the isomorphism $N_{\alpha} \stackrel{\sim}{\rightarrow}
\varphi (N_{\alpha})$. 

If $N_{\alpha},\ \alpha\in W$ intersects 
the exceptional $E$ with intersection number $\ge 2$, then clearly 
$\varphi (N_{\alpha})$ acquires a singular point, a contradiction.
See the picture below.
It follows $(E, N_{\alpha})=0$ or $1$. On the other hand we have
$(E,L)=(E,2H-\sum_{\alpha\in W} N_{\alpha})=0$, thus $0\le (E,H)\le 3$. 
$(E,H)=0$ is prohibited by Proposition \ref{H}.

\begin{picture}(400,200)
\thinlines
\qbezier(75,120)(75,150)(75,180)
\put(72,110){$E$}
\qbezier(120,130)(0,150)(120,170)
\put(121,128){$N_{\alpha}$}
\qbezier(170,150)(170,150)(190,150)
\qbezier(186,154)(186,154)(190,150)
\qbezier(186,146)(186,146)(190,150)
\put(280,150){\circle*{3}}
\put(273,140){$\varphi (E)$}
\put(301,173){$\varphi (N_{\alpha})$}
\qbezier(300,170)(300,170)(280,150)
\qbezier(280,150)(260,130)(254,130)
\qbezier(254,130)(240,130)(240,150)
\qbezier(240,150)(240,170)(254,170)
\qbezier(254,170)(260,170)(280,150)
\qbezier(280,150)(280,150)(300,130)
\qbezier(60,20)(60,50)(60,80)
\put(58,10){$E$}
\qbezier(120,30)(0,50)(120,70)
\put(121,28){$N_{\alpha}$}
\qbezier(170,50)(170,50)(190,50)
\qbezier(186,54)(186,54)(190,50)
\qbezier(186,46)(186,46)(190,50)
\put(240,50){\circle*{3}}
\qbezier(240,50)(270,50)(300,70)
\qbezier(240,50)(270,50)(300,30)
\put(235,40){$\varphi (E)$}
\put(302,25){$\varphi (N_{\alpha})$}
\end{picture}

Let us denote by $\overline{E}$ the corresponding curve on $\overline{X}=J(C)/\iota$.
This is a smooth rational curve passing through $2(H,E)$ nodes.

Assume $(H,E)=1$.
Then the inverse image of $\overline{E}$ in $J(C)$ is 
a double cover branched at two points of $\overline{E}$, hence
a rational curve. 
Since an abelian 
surface doesn't contain any rational curve, a contradiction.

Assume $(H,E)=2$. Then $\overline{E}$ is an irreducible conic in $\bb{P}^3$ 
passing through four nodes belonging to $W$. These nodes therefore must be 
contained in a hyperplane
of $\bb{P}^3$, which contradicts to lemma below.

Assume $(H,E)=3$. Then $\overline{E}$ is a 
cubic curve passing through six nodes of $W$. By the lemma below, it is 
exactly the twisted cubic 
determined by $W$ and the uniqueness follows from 
the Steiner construction \cite{GH}.
Thus the whole proposition is reduced to the next lemma. 
\end{proof}
\begin{lem}
If we identify $W$ with the corresponding nodes $n_{\alpha}$ of $\overline{X}$, then
no four points of Weber hexad $W$ is coplanar. Namely they are in general position as to 
$\mca{O} (1)$.
\end{lem}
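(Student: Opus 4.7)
The plan is to combine a classical characterization of coplanar $4$-tuples of nodes on the Kummer quartic with a short combinatorial check. I invoke the classical fact (see Hudson's treatise on Kummer surfaces, or the references in \cite{dolgachev-keum}) that four nodes of $\overline{X} \subset \bb{P}^3$ are coplanar if and only if either (a) they lie in the hexad of six nodes incident to some trope $\overline{T}_\beta$, or (b) they form a G\"{o}pel tetrad, i.e., an affine translate of a totally isotropic $2$-dimensional subspace of $(J(C)_2, (\cdot,\cdot))$. Given this, the lemma reduces to showing that no $4$-subset of the Weber hexad $W$ satisfies (a) or (b).

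For case (b), I use the decomposition $W = G \ominus R$ with $|G \cap R| = 1$ of Lemma \ref{ww}: write $G = \{g_0, g_1, g_2, g_3\}$ and $R = \{g_0, r_1, r_2, r_3\}$, so that $W = \{g_1, g_2, g_3, r_1, r_2, r_3\}$, and let $V_G, V_R \subset J(C)_2$ denote the underlying linear $2$-planes; the condition $|G \cap R| = 1$ forces $V_G \cap V_R = \{0\}$. A $4$-subset $T$ of $J(C)_2$ is a coset of a $2$-plane (a necessary condition for being G\"{o}pel) if and only if $\sum_{\alpha \in T} \alpha = 0$. Splitting $T \subset W$ according to $k := |T \cap \{g_1, g_2, g_3\}| \in \{1, 2, 3\}$ and using $g_1 + g_2 + g_3 = r_1 + r_2 + r_3 = g_0$ (the three non-zero elements of an $\bb{F}_2$-plane sum to zero), one finds that $\sum_{\alpha \in T} \alpha$ equals $g_0 + r_j$ when $k = 1$, or $g_0 + g_i$ when $k = 3$, both non-zero; while the case $k = 2$ reduces to the equation $v_a + v_b = w_c + w_d$ with $0 \neq v_a + v_b \in V_G$ and $0 \neq w_c + w_d \in V_R$, contradicting $V_G \cap V_R = \{0\}$. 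Hence no $4$-subset of $W$ is G\"{o}pel.

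For case (a), by Lemma \ref{ww} we may assume $W$ takes one of the two standard forms $\{0, 12, 23, 34, 45, 51\}$ (a pentagon with center $0$) or $\{12, 23, 31, 14, 25, 36\}$ (a triangle with matching). The trope hexads are either $T_r$-type $\{0\} \cup \{ri : i \neq r\}$ or $T_{ijk}$-type consisting of all $2$-subsets of $\{i,j,k\}$ together with all $2$-subsets of the complement $\{l,m,n\}$. A direct enumeration gives $|W \cap (\text{trope hexad})| \leq 3$ in every case: for the first form, each pentagon vertex has two incident edges (bounding the $T_r$-intersection by $1+2 = 3$), and any $3$-subset of $\{1,\ldots,6\}$ contains at most two pentagon edges while the complement contributes at most one more; for the second form the bound $3$ follows similarly from the three triangle edges combined with the matching. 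The main obstacle is justifying the classical input (a)/(b); if a self-contained route is preferred, an effective representative $D$ of $|H - \sum_{i=1}^4 N_{\alpha_i}|$ can be analyzed via intersection numbers with the tropes, where the bound $|W \cap T_\beta\text{-hexad}| \leq 3$ gives $(D, T_\beta) \geq -1$ for all $\beta$, after which Riemann--Roch on the $K3$ surface $X$ combined with the G\"{o}pel argument of case (b) excludes effectivity.
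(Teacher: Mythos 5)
Your combinatorial work is essentially correct (modulo a harmless swap of the cases $k=1$ and $k=3$: one $g_i$ plus all three $r_j$'s sums to $g_0+g_i$, and vice versa), and the argument that no $4$-subset of $W$ is a coset of a $2$-plane is clean. The genuine gap is the input you call classical. As an ``if and only if'' it is false: on a general Kummer quartic a G\"{o}pel tetrad of nodes is \emph{not} coplanar. Indeed, a plane through four nodes pulls back to a symmetric divisor $D\in|2\Theta|$ with double points at four $2$-torsion points; an irreducible such $D$ would have geometric genus $\le p_a(D)-4=1$, which is impossible on an abelian surface (no rational curves, and a geometric-genus-one image is a smooth elliptic curve with self-intersection $0\neq 8$), while the reducible possibilities with four such double points are only $2\Theta_\beta$ (the tropes) or a sum of two elliptic curves meeting in four $2$-torsion points, which exists only on special Jacobians. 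So only the ``only if'' direction is true, it is exactly what you need, and it is essentially as hard as the lemma itself --- yet it is nowhere proved in your write-up. Your fallback sketch does not close this: for $D=H-\sum_{i=1}^4 N_{\alpha_i}$ one has $D^2=-4$ and $\chi(D)=0$, so Riemann--Roch yields only $h^0(D)=h^1(D)$ and no non-effectivity; and ruling out effectivity by peeling off curves with $(D,C)<0$ would require controlling \emph{all} $(-2)$-curves of low degree, not just the $N_\alpha$ and $T_\beta$ --- precisely the delicate point on the non-generic surfaces this paper insists on covering.

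The paper's own proof sidesteps any classification of coplanar tetrads. It first shows no three nodes are collinear: a line $\ell$ through three nodes would lie on the quartic with $(\ell,H)=1$, forcing $(\ell,T_\beta)=0$ for all $\beta$, and summing $H\sim 2T_\beta+\sum_{(N_\alpha,T_\beta)=1}N_\alpha$ over all sixteen $\beta$ gives $16H\sim 2\sum T_\beta+6\sum N_\alpha$, whose right side meets $\ell$ in at least $18>16$. Then, for each $4$-subset of $W$ it exhibits a trope containing exactly three of the four nodes; those three span a unique plane (by non-collinearity), that plane is the trope plane, and by Proposition \ref{H} the trope plane meets $\overline{X}$ only along the trope conic, hence misses the fourth node. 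Note that the combinatorial fact this requires --- for every $4$-subset of $W$ there is a trope hexad meeting it in exactly three points --- is different from the bound $|W\cap(\text{trope hexad})|\le 3$ that you verified. To repair your proof, either establish the ``only if'' direction of your dichotomy along the lines of the $|2\Theta|$ analysis above, or switch to the paper's trope argument.
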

\begin{proof}
We begin by showing that no three nodes of $\overline{X}$ are collinear.
Assume the contrary. Then since $\overline{X}$ is a quartic surface, the line $l$ containing
them lies on $\overline{X}$ and $(l,H)=1$ (intersection numbers are computed on $X$,
so we identify $l$ with its strict transform on $X$). 
By the relation 
\begin{equation}\label{aaaa}
H\sim 2T_{\beta}+\sum_{(N_{\alpha},T_{\beta})=1} N_{\alpha}
\end{equation}
we see 
that $(l,T_{\beta})=0$. On the other hand, clearly for (at least) three $\alpha$ we 
have $(l,N_{\alpha})=1$. Summing up the relation (\ref{aaaa}) over $\beta\in S(C)$, we obtain
$16H \sim 2 \sum T_{\beta} + 6 \sum N_{\alpha}$. The left-hand-side intersects $l$ with $16$
but the right-hand-side intersects $l$ with at least $6\cdot 3=18$, hence we obtain
a contradiction.

Next, because the incidence relation between nodes and tropes 
is preserved under the affine symplectic group $G$, it suffices to prove the lemma 
in case $W=\{12,23,31,14,25,36\}$ for example.
Choose four points $\{12,23,14,25\}$. We see that the trope
$T_{2}$ passes through the points $n_{12},n_{23},n_{25}$ and doesn't through $n_{14}$. 
By Proposition \ref{H} a trope is a conic and coincides with the hyperplane section. 
Thus the four points are not coplanar. Similarly for every four points from $W$,
we can find a trope containing three 
but not the remaining fourth point. Thus we obtain the lemma.
\end{proof}
\begin{cor}\label{11}
The singularities of $X_W$ consist of $10$ or $11$ ordinary nodes.
If $X$ is Picard-general, i.e., the Picard number of $X$ is $17$, then 
$X_W$ has only $10$ nodes.
\end{cor}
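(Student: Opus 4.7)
The first assertion I would obtain by assembling what has already been shown. By (\ref{10nodes}) the ten curves $\{N_\alpha : \alpha\in J(C)_2-W\}$ are contracted by $\varphi$ to ordinary nodes of $X_W$. Any further $(-2)$-curve on $X$ orthogonal to $L$ must, by Proposition \ref{11th}, be the unique curve $E$ with $(E,H)=3$, $(E,N_\alpha)=1$ for $\alpha\in W$ and $(E,N_\alpha)=0$ for $\alpha\notin W$; in particular this $E$ is disjoint from the ten $N_\alpha$'s already counted. Hence the exceptional locus of $\varphi$ is a disjoint union of at most eleven smooth rational $(-2)$-curves. The proof of Corollary \ref{nml} identifies this exceptional locus with the minimal resolution of the singularities of $X_W$; since each connected component consists of a single $(-2)$-curve, each singularity is an $A_1$, i.e., an ordinary node. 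This establishes the dichotomy.

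For the Picard-general statement I would argue by contradiction and suppose the eleventh $(-2)$-curve $E$ exists. A quick linear-independence check -- intersect any putative relation with $H$ and with each $N_\alpha$, and use $H^2=4$, $(H,N_\alpha)=0$, $(N_\alpha,N_{\alpha'})=-2\delta_{\alpha\alpha'}$ -- shows that the seventeen classes $\{H\}\cup\{N_\alpha\}_{\alpha\in J(C)_2}$ are linearly independent in $\mathrm{NS}(X)\otimes\bb{Q}$. Under $\rho(X)=17$ they therefore form a $\bb{Q}$-basis, so the unique expansion of $E$ in this basis has coefficients that can be read off directly from the intersection numbers in Proposition \ref{11th}. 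Computing $E^2$ from that expansion returns a rational value different from $-2$, contradicting the $(-2)$-curve hypothesis.

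The main obstacle -- and the step that carries the conceptual weight -- is this final self-intersection check. The gap between the computed $E^2$ and $-2$ is exactly what an extra class in $\mathrm{NS}(X)\otimes\bb{Q}$ orthogonal to $\{H,N_\alpha\}$ would absorb, and such a class becomes available precisely when the Picard rank jumps beyond $17$. Even at this early stage one therefore sees that the eleventh node is a Noether--Lefschetz type phenomenon, foreshadowing the main theorem identifying it with the Comessatti condition.
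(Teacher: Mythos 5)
Your proof is correct and follows essentially the same route as the paper: the dichotomy comes from Proposition \ref{11th} together with (\ref{10nodes}), and the Picard-general case uses that $\{H\}\cup\{N_\alpha\}$ spans $NS(X)_{\bb{Q}}$ when $\rho=17$, so no class with the prescribed intersection numbers can have self-intersection $-2$. Your explicit computation (the unique candidate is $\tfrac{3}{4}H-\tfrac{1}{2}\sum_{\alpha\in W}N_\alpha$, with square $-3/4$) just spells out what the paper leaves implicit, and is consistent with the decomposition $E=E_{NS}+E_T$, $(E_T^2)=-5/4$, used later in Section \ref{mod}.
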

\begin{proof}
The former part follows from the previous 
proposition. For the latter, we recall that for Picard-general
$X$, $NS(X)$ is generated over $\bb{Q}$ by the divisors $\{H,N_{\alpha}\}$. 
There exist no elements satisfying the condition for $E$ above, so it doesn't 
exist.
\end{proof}

\section{Hutchinson-Weber involutions and Comessatti surfaces}\label{HWi}

\noindent {\textbf{Hutchinson-Weber involutions.}}
We keep the assumption that $X$ is a Jacobian Kummer surface associated to $C$.
Let us consider the Hessian model $X_W: \{\sum s_i=\sum \lambda_i/s_i =0\}$ defined 
in $\bb{P}^4$.
We consider the Hutchinson-Weber involution defined 
by $\sigma_W \colon (s_1,\cdots, s_5)\mapsto (\lambda_1/s_1,\cdots,\lambda_5/s_5)$. It induces a
biregular involution on $X$, which also we denote by $\sigma_W$.

\begin{prop}\label{41}
The following are equivalent.
\begin{enumerate}
\item There exists one more node other than $10$ nodes of (\ref{10nodes}).
\item $\sigma_W$ is not fixed-point-free.
\item For some choice of signatures, we have 
$\pm \sqrt{\lambda_1}\pm \cdots \pm \sqrt{\lambda_5}= 0$.
\end{enumerate}
\end{prop}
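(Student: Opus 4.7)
The plan is to identify the 11th node explicitly with a fixed point of the Cremona transformation $\sigma_W$ on $\mathbb{P}^4$, and then to bridge the existence of such a node to the existence of fixed loci of $\sigma_W$ on $X$.

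First, I would establish $(1)\Leftrightarrow(3)$ by direct computation. Away from its codimension-two indeterminacy, the Cremona involution $\sigma_W\colon[s_i]\mapsto[\lambda_i/s_i]$ has fixed points of the form $Q_\epsilon=[\epsilon_1\sqrt{\lambda_1}:\cdots:\epsilon_5\sqrt{\lambda_5}]$ parametrized by $\epsilon\in\{\pm1\}^5/\{\pm1\}$. Substituting $Q_\epsilon$ into the two defining equations yields $f(Q_\epsilon)=\sum_i\epsilon_i\sqrt{\lambda_i}$ and $g(Q_\epsilon)=\bigl(\prod_j\epsilon_j\sqrt{\lambda_j}\bigr)\cdot f(Q_\epsilon)$, so the condition $Q_\epsilon\in X_W$ is exactly the sign equation in $(3)$. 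Assuming this, a direct gradient computation gives $\nabla f=(1,1,1,1,1)$ and, using $\sum_i\epsilon_i\sqrt{\lambda_i}=0$, $\nabla g|_{Q_\epsilon}=-\bigl(\prod_j\epsilon_j\sqrt{\lambda_j}\bigr)\cdot(1,1,1,1,1)$; the two gradients are parallel, so the Jacobian rank drops and $Q_\epsilon$ is a singular point of $X_W$. Since all its coordinates are nonzero, it cannot coincide with any $P_{ijk}$, hence it must be the extra 11th node. For the converse, the uniqueness in Corollary~\ref{11}, combined with the fact that $\sigma_W$ preserves the combinatorial distinction between the standard nodes (three vanishing coordinates) and the extra one, forces the extra node (if present) to be $\sigma_W$-fixed; being in $\mathbb{P}^4$ with all coordinates nonzero, it lies away from the Cremona indeterminacy and therefore equals some $Q_\epsilon$.

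For $(1)\Rightarrow(2)$, the 11th node $Q_\epsilon$ is $\sigma_W$-fixed, so the exceptional $(-2)$-curve $E$ above it (from Proposition~\ref{11th}) is $\sigma_W$-invariant, and $\sigma_W|_E$ is an involution of $E\simeq\mathbb{P}^1$ which must have fixed points. For $(2)\Rightarrow(1)$, any fixed point of $\sigma_W$ on $X$ descends to a $\sigma_W$-fixed point of $X_W$; if the image lay in the smooth locus outside the indeterminacy of the Cremona, then it would be some $Q_\epsilon$, which we have already shown is singular on $X_W$---a contradiction. Hence the image is a node, and a combinatorial analysis of the $\sigma_W$-action on the standard exceptional curves $\{N_\alpha:\alpha\in J(C)_2-W\}$, determined by the Cremona action on the coordinate stratification and consistent with the cohomological description in~\cite{ohashi09}, shows that no standard $N_\alpha$ is individually preserved by $\sigma_W$; therefore no fixed point of $\sigma_W$ can lie over a standard node, and the fixed point must lie over the 11th node, which consequently exists.

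The main obstacle will be the last step, namely verifying that $\sigma_W$ does not preserve any of the ten standard exceptional curves $N_\alpha$ individually, so that no fixed locus of $\sigma_W$ can arise from the standard nodes. This is forced combinatorially by the Cremona $(s_i)\mapsto(\lambda_i/s_i)$ together with the Weber hexad structure, but it requires careful bookkeeping of the $\sigma_W$-action on the ten standard nodes.
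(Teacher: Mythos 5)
Your overall strategy is the same as the paper's: both proofs reduce everything to the fixed points of the Cremona involution inside the torus $\{s_1\cdots s_5\neq 0\}$, which are exactly the points $Q_\epsilon=[\epsilon_1\sqrt{\lambda_1}:\cdots:\epsilon_5\sqrt{\lambda_5}]$, and both identify the $11$th node with such a point via the Jacobian criterion (your gradient computation $\nabla g|_{Q_\epsilon}=-\bigl(\prod_j\epsilon_j\sqrt{\lambda_j}\bigr)\nabla f$ is a correct unpacking of the paper's rank condition (\ref{rel})). The paper organizes the argument as $(1)\Leftrightarrow(3)$ and $(2)\Leftrightarrow(3)$ rather than routing $(2)$ through $(1)$, but that is cosmetic.

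There are, however, two places where you assert exactly what needs to be proved. First, in $(1)\Rightarrow(3)$ you declare that the extra node has all coordinates nonzero; this is the real content of that implication and does not follow from your equivariance remark. The paper supplies it: each hyperplane section $H_i\cap X_W$ consists of four lines $L_{ij}$ in general position, so any singular point of $X_W$ lying on a coordinate hyperplane is one of the six $P_{ijk}$ with $i$ among the indices; hence an $11$th node is forced into the torus, where the Jacobian criterion (or your equivariance argument) applies. Second, in $(2)\Rightarrow(1)$ your case analysis covers fixed points mapping into the torus and fixed points mapping to the ten nodes, but omits fixed points mapping to smooth points of the lines $L_{ij}$, i.e.\ lying on the ten tropes that $\varphi$ sends to those lines. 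Both this omission and the ``main obstacle'' you flag (that no $N_\alpha$ is individually preserved) are settled at once by the single observation the paper uses: the Cremona interchanges $L_{ij}$ and $P_{klm}$ for $\{i,j\}\sqcup\{k,l,m\}=\{1,\dots,5\}$, so on $X$ the involution swaps the corresponding trope with the corresponding $N_\alpha$; since $P_{klm}\notin L_{ij}$ these two curves are disjoint, and no fixed point can lie over any coordinate hyperplane. With these two points supplied your argument closes up and coincides with the paper's; the bookkeeping you anticipated as difficult is in fact a one-line consequence of the explicit form of the Cremona.
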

\begin{proof} 
(1) $\Leftrightarrow$ (3): The 11th node $p$ corresponds to the rational curve $E$ of Proposition
\ref{11th}. The hyperplane $\{s_i=0\}$ cuts $X_W$ along four lines in $\bb{P}^2$ 
in general position.
Its singularities are $6$ nodes appearing from (\ref{10nodes}). Hence $p$ 
is located inside
the open set $\{s_1\cdots s_5\neq 0\}$.
By the Jacobian criterion of smoothness, 
we easily deduce that the 11th node should satisfy the relation 
\begin{equation}\label{rel}
\rank
\begin{pmatrix}
1 & 1 & 1 & 1 & 1 \\
\frac{\lambda_1}{s_1^2} & \frac{\lambda_2}{s_2^2} & \frac{\lambda_3}{s_3^2} & \frac{\lambda_4}{s_4^2} & \frac{\lambda_5}{s_5^2}
\end{pmatrix}
\leq 1.
\end{equation}
Thus its existence is equivalent to the condition (3).

(2) $\Leftrightarrow$ (3): 
First we notice that $\sigma_W$ sends the line 
$L_{ij}=\{s_i=s_j=0\}$ to the point $P_{klm}=\{s_k=s_l=s_m=0\}$,
where $\{i,\cdots, m\}$ is an arbitrary permutation of $\{1,\cdots,5\}$. Vice versa, $P_{klm}$ is 
sent to $L_{ij}$ since $\sigma_W$ is an involution. Thus a fixed point can occur only 
inside the open set $\{s_1\cdots s_5\neq 0\}$. 
Here clearly the fixed point is given by the further condition
\[\frac{\lambda_1}{s_1^2} = \frac{\lambda_2}{s_2^2} = \cdots = \frac{\lambda_5}{s_5^2},\]
which is equivalent to the relation
(\ref{rel}). Thus it is equivalent to the condition (3)
\end{proof}
By the above proof, the fixed point of $\sigma_W$ corresponds to the eleventh node of $X_W$. 
In this case since $\sigma_W$ is non-symplectic, it 
fixes the whole exceptional curve $E$.
\begin{rem} 
The equation 
\[s_1+\cdots+s_5= \frac{s_1^3}{\lambda_1}+\cdots+\frac{s_5^3}{\lambda_5}=0,\]
which defines a cubic surface, is called the Sylvester form of the cubic. 
It is known that generic cubic surface can be written in the Sylvester form 
in a unique way up to permutations and homothethy, so this equation is well-studied
in connection with the moduli problem of cubic surfaces.
Our $X_W$ is exactly of the form of "Hessian surface" of this cubic, hence the name.
We note that there are four parameters for cubic surfaces, while there are three
parameters for Jacobian Kummer surfaces. Hence general Hessian $K3$ surfaces can not 
be obtained as the Hessian model of Jacobian Kummer surfaces.

It is known that the condition (3) in the preceding proposition represents 
the locus of singular cubic surfaces, see for example \cite{dardanelli-vg}.
Genus two curves and singular cubics constitute the Kummer divisor and the boundary divisor
inside the four-dimensional moduli space of cubic surfaces, respectively.
Thus our object, the degenerations of Hutchinson-Weber involutions,
correspond to the intersection of these divisors.
\end{rem}

\noindent {\textbf{Comessatti surfaces.}}
We begin by the definition.
\begin{dfn}\label{come}
An abelian surface $A$ is called a Comessatti surface if it has real multiplication 
in the maximal order $\mca{O}_{\bb{Q}(\sqrt{5})}$ of $\bb{Q} (\sqrt{5})$, 
i.e., if $\mca{O}_{\bb{Q}(\sqrt{5})}=\bb{Z} [(1+\sqrt{5})/2]\subset \End (A)$.  
\end{dfn}
Let us suppose that the Jacobian $J(C)=:A$ is at the same time Comessatti.
We fix a theta divisor
$\Theta =\Theta_{\beta}$ and let
$\varphi \mapsto \varphi '$ be the Rosati involution 
on $\End (A)$
associated to $\mca{O}_A (\Theta )$.
We note that by the positivity of Rosati involution, 
it acts on $\mca{O}_{\bb{Q}(\sqrt{5})}$ trivially.
By definition we can
consider the endomorphism $\varepsilon = (1+\sqrt{5})/2$
which is in fact an automorphism.
By \cite[Section 21]{mumford}, we get 
\begin{equation}\label{3}
 (\Theta, \varepsilon^* \Theta )= \tr_{\bb{Q}(\sqrt{5})/\bb{Q}} (\varepsilon \varepsilon')= 3.
\end{equation}
Since $\Theta_{\beta}$ contains six $2$-torsion points $[\beta -p_i]\ (i=1,\cdots ,6)$, 
$\Xi := \varepsilon^* \Theta$ also contains 
six $2$-torsion points $w_i=\varepsilon^{-1} ([\beta -p_i])$.
\begin{prop}\label{weber}
$W=\{ w_1, \cdots, w_6\}$ is a Weber hexad.
\end{prop}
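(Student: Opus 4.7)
The plan is to realize the image $\overline\Xi\subset\overline X$ as a twisted cubic passing through the six nodes indexed by $W$, and then to use the general-position property of the six nodes on a twisted cubic to force $W$ to be a Weber hexad.

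First, I would note that $\varepsilon^{-1}$ is a bijection of $J(C)_2$, so the six $w_i$ are pairwise distinct and $W$ is exactly the set of $2$-torsion points lying on the smooth symmetric genus-$2$ divisor $\Xi = \varepsilon^*\Theta$ (the symmetry follows from $\iota\varepsilon = \varepsilon\iota$, which gives $\iota^*\Xi = \varepsilon^*\iota^*\Theta = \Xi$). Denoting by $\pi\colon J(C)\to \overline X$ the quotient by $\iota$, we have $\pi^*H = 2\Theta$ and $\pi_*\Xi = 2\overline\Xi$, since $\pi|_\Xi$ is $2$-to-$1$. Combining the projection formula with the intersection number (\ref{3}) gives
\[
\overline\Xi\cdot H \;=\; \tfrac12\,\pi_*\Xi\cdot H \;=\; \tfrac12\,\Xi\cdot \pi^*H \;=\; \Xi\cdot\Theta \;=\; 3,
\]
so $\overline\Xi$ is a rational cubic on the Kummer quartic $\overline X\subset\mathbb P^3$ passing through exactly the six nodes $\{n_{w_i}\}_{i=1}^6$.

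Next, I would pass to the minimal resolution $X$ and check that the strict transform $E\subset X$ of $\overline\Xi$ is a $(-2)$-curve with $E^2 = -2$, $(E, H) = 3$, $(E, N_{w_i}) = 1$ for each $w_i\in W$, and $(E, N_\alpha) = 0$ for $\alpha\notin W$. These follow from the analogous computations on the double cover $\tilde\pi\colon \tilde A \to X$, using $\tilde\pi^*E = \tilde\Xi$ (the strict transform on the blow-up $\tilde A$ of $J(C)$ at its $2$-torsion) and $\tilde\pi^*H = 2\pi_A^*\Theta$. In particular $(E, 2H - \sum_{\alpha\in W} N_\alpha) = 0$, so $E$ is contracted by the Hessian linear system, and matches the intersection profile of the eleventh-node curve of Proposition \ref{11th}.

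Finally, I would conclude that $W$ is a Weber hexad. Because $\overline\Xi$ is an irreducible cubic spanning $\mathbb P^3$, B\'ezout ensures that every plane meets $\overline\Xi$ in at most three points, so no four of the six nodes $\{n_{w_i}\}$ can be coplanar. The main obstacle is then showing that this "no four coplanar" condition characterizes Weber hexads among the orbits of $6$-subsets of $J(C)_2$ under the affine symplectic group $(\mathbb{Z}/2\mathbb{Z})^4\rtimes \mathcal{S}_6$. I would handle this by running through the orbit representatives and exhibiting, in each non-Weber orbit, four elements whose corresponding nodes lie on a common trope; for instance, the non-Weber $6$-subsets $\{0, 12, 13, 14, 15, 16\}$ and $\{12, 13, 23, 45, 46, 56\}$ are themselves sets of six nodes lying on a single trope (odd resp.\ even), hence entirely coplanar. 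With the Weber orbit being the sole survivor, the proposition follows.
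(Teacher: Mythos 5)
Your opening computation is sound and matches the paper's starting point: the intersection number $(\Xi,\Theta)=3$ from (\ref{3}) is indeed the engine of the proof, and your projection-formula argument correctly shows that $\overline{\Xi}$ is a rational cubic through the six nodes $\{n_{w_i}\}$ (to conclude it is a \emph{twisted} cubic rather than a plane cubic you should add that $\overline{\Xi}\simeq \Xi/\iota$ is smooth and rational, hence cannot be a plane cubic). But the last step is a genuine gap. You reduce the proposition to the claim that a six-element subset of $J(C)_2$ no four of whose nodes are coplanar must be a Weber hexad, and you propose to verify this by ``running through the orbit representatives'' of $6$-subsets under the affine symplectic group, exhibiting in each non-Weber orbit four nodes on a common trope. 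You give only two examples, and both are the node-set of a single trope ($T_1$ and $T_{123}$ respectively), so they lie in \emph{one} orbit; meanwhile $\binom{16}{6}=8008$ and the Weber orbit has only $192$ elements, so there are many non-Weber orbits left unexamined. It is not obvious (and you give no argument) that every non-Weber hexad meets some trope's node-set in at least four points, nor that trope planes account for all coplanarities you would need. As written, the decisive combinatorial fact is asserted, not proved.

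The paper takes a different and much shorter route that avoids this enumeration entirely. From $\sum_i[\beta-p_i]=0$ one gets $\sum w_i=0$, so the partial sums $x:=w_1+w_2+w_3=w_4+w_5+w_6$ produce two affine $2$-dimensional subspaces $I=\{x,w_1,w_2,w_3\}$, $J=\{x,w_4,w_5,w_6\}$ with $W=I\ominus J$; each of $I,J$ is a G\"opel or Rosenhain tetrad, and the two ``bad'' type combinations (both Rosenhain, both G\"opel) are killed by exhibiting a trope $\Theta_{123}$ meeting $W$ in four $2$-torsion points, which would force $(\Xi,\Theta_{123})\ge 4$, contradicting $(\Xi,\Theta_{123})=(\Xi,\Theta)=3$. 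Note that this uses the same inequality you would need, but applied to a handful of explicitly listed configurations rather than to all orbits of $6$-subsets. If you want to salvage your approach, you would either have to carry out the full orbit analysis honestly, or better, notice that the constraint $\sum w_i=0$ (which your argument never uses) already cuts the possibilities down to the three tetrad-type cases handled in the paper.
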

\begin{proof}
Clearly the sum $\sum w_i$ is zero. 
Hence the partial sums $w_1+w_2+w_3$ and $w_4+w_5+w_6$ are equal. We put this element as $x$. 
It is easy to see $x\not\in W$.
Then $I=\{x,w_1,w_2,w_3\}$ and $J=\{x,w_4,w_5,w_6\}$ are affine $2$-dimensional subspaces
with $I\ominus J=W$. 
Any $2$-dimensional affine subspace is either a Rosenhain tetrad or a G\"{o}pel tetrad.
Therefore as to the types three possibilities occur. 
Up to translation we can assume $x=0$ without loss of generality. 

Assume that $I,J$ are both Rosenhain tetrads. We can put $I=\{0,12,23,31\}$ by permutation. Then 
$J$ can be either $\{0,14,45,51\}$ or $\{0,45,56,64\}$ up to permutation. 
In both cases we deduce that $(\Xi, \Theta_{123})\ge 4$ and get contradiction to
(\ref{3}).

Assume that $I,J$ are both G\"{o}pel tetrads. We can put $I=\{0,12,34,56\}$ by permutation. 
Then $J$ can be only $\{0,23,45,61\}$ up to permutation. Again $(\Xi, \Theta_{123})\ge 4$ and
contradiction. 

Thus we have $W=I\ominus J$ with $I,J$ are Rosenhain and G\"{o}pel. Hence $W$ is a 
Weber hexad.
\end{proof}
We easily observe that the equation (\ref{3}) and the above proposition is also true 
for $\eta = \varepsilon^{-1} = (-1+\sqrt{5})/2$ instead of $\varepsilon$. 
We thus obtain the following set of genus two curves on $J(C)$.
\begin{equation}\label{32}
\mca{W} = \{\varepsilon^* \Theta_{\beta}\mid \beta\in S(C)\} \cup
\{\eta^*\Theta_{\beta}\mid \beta\in S(C)\}.
\end{equation}
Here for the convenience we note that under the isomorphism 
\begin{equation}\label{symm}
NS(J(C)) \stackrel{\sim}{\rightarrow} \End^{\text{sym}} (J(C)) = \{\varphi\in \End (J(C))\mid 
\varphi' = \varphi\},
\end{equation}
\cite[Chapter 5]{birkenhake-lange},
we have $c_1 (\mca{O} (\Theta_{\beta})) \mapsto \id$ and $c_1 (\mca{O} 
(\varepsilon^* \Theta_{\beta})) \mapsto \varepsilon^2$. By the relation $\varepsilon^4 -3\varepsilon^2 +1=0$, 
we obtain the algebraic equivalence $\eta^* \Theta_{\beta} \approx
3\Theta_{\beta}-\varepsilon^* \Theta_{\beta}.$

Recall that $\iota =-\id$ is the inversion. 
\begin{lem}
Let $F$ be a smooth genus two curve on $J(C)$. Then $\iota^*F=F \text{ (as a set)}$ if and only if 
$F$ passes through six $2$-torsion points.
\end{lem}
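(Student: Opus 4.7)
Write $A := J(C)$. Since $F$ is a smooth genus two curve on the abelian surface $A$, adjunction gives $F^2 = 2g(F)-2 = 2$, so $F$ is a principal polarization on $A$; moreover, because $\iota = -\id$ acts trivially on $\mathrm{NS}(A)$, we have $F \cdot \iota^*F = F^2 = 2$. The plan is to handle the two implications separately.

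For the direction ``$F$ passes through six $2$-torsion points $\Rightarrow \iota^*F = F$'', I observe that $\iota$ fixes every element of $A_2$ pointwise, so any $2$-torsion point of $F$ lies on $\iota^*F$ as well; in particular $|F \cap \iota^*F| \ge 6$. But two distinct irreducible curves with intersection number $2$ can meet in at most $2$ points, so we are forced to have $F = \iota^*F$ as sets.

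For the reverse direction, assume $\iota^*F = F$ and set $\sigma := \iota|_F$, a biregular involution of the smooth genus two curve $F$ with $\Fix(\sigma) = F \cap A_2$. The plan is to compute $|\Fix(\sigma)|$ via the topological Lefschetz fixed-point formula. The key input is that $\sigma^* = -\id$ on $H^1(F,\bb{C})$: since $F$ is a principal polarization, the Albanese morphism $\mathrm{Alb}(F) \to A$ is an isomorphism, whence the restriction $H^0(A,\Omega^1_A) \to H^0(F,\Omega^1_F)$ is an isomorphism of two-dimensional spaces; equivariance together with $\iota^* = -\id$ on $H^0(A,\Omega^1_A)$ then forces $\sigma^* = -\id$ on $H^{1,0}(F)$, and complex conjugation propagates this to $H^{0,1}(F)$, giving $\tr(\sigma^* \mid H^1(F,\bb{C})) = -4$. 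Consequently the Lefschetz number equals $1 - (-4) + 1 = 6$, and since every fixed point of an involution on a curve has index $+1$, we conclude $|F \cap A_2| = 6$.

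The only delicate step I expect is the identification $\mathrm{Alb}(F) \cong A$, which is what converts the abstract action of $\sigma^*$ on $H^{1,0}(F)$ into the concrete $-\id$ coming from $\iota^*$ on the ambient space of holomorphic $1$-forms. This is standard granted that $F$ is a principal polarization (automatic from $F^2 = 2$), but it is really the single geometric fact powering the Lefschetz count and deserves to be verified carefully.
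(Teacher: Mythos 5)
Your proof is correct, and one of the two directions coincides with the paper's while the other takes a genuinely different (though closely related) route. The implication ``six $2$-torsion points $\Rightarrow \iota^*F=F$'' is exactly the paper's argument: $\iota$ acts trivially on $H^2(J(C),\bb{Z})$, so $(F,\iota^*F)=(F^2)=2$, while the six $2$-torsion points, being fixed by $\iota$, give at least six common points. For the converse the paper is more direct: it identifies $J(C)$ with $\Pic^0(F)$ --- the same fact you isolate as $\mathrm{Alb}(F)\cong A$, coming from $F^2=2$ and the indecomposability of the polarization --- and then simply observes that $-\id$ restricted to a symmetric theta divisor acts as the hyperelliptic involution of $F$, which has six fixed points (the Weierstrass points). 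You instead push $\iota^*=-\id$ through the equivariant isomorphism $H^0(A,\Omega^1_A)\cong H^0(F,\Omega^1_F)$ to get $\sigma^*=-\id$ on $H^1(F,\bb{C})$ and count fixed points by the topological Lefschetz formula, $1-(-4)+1=6$, all local indices being $+1$ for a nontrivial holomorphic involution. Both routes rest on the same geometric input ($A\cong J(F)$); yours trades the explicit identification of $\iota|_F$ with the hyperelliptic involution for a cohomological count, which is slightly longer but has the small advantage that equivariance of restriction does the bookkeeping that otherwise requires knowing $F$ is embedded as a \emph{symmetric} translate of the theta divisor. (Indeed $\sigma^*=-\id$ on $H^{1,0}(F)$ forces $F/\sigma\cong\bb{P}^1$, so your computation secretly reproves that $\sigma$ is the hyperelliptic involution.)
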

\begin{proof}
Let us assume $\iota^* F=F$. Then,
since we can regard $J(C)=\Pic^0 (F)$, $\iota \mid_{F}$ acts as a hyperelliptic involution
and it has $6$ fixed points.
Conversely suppose $F$ contains six $2$-torsion points. $\iota$ acts on $H^2 (J(C), \bb{Z})$ 
trivially, hence $(F, \iota^*F)=(F^2)=2$ and $\# F \cap \iota^*F \geq 6$ imply $F=\iota^* F$. 
\end{proof}
\begin{lem}\label{cha}
Curves $F\in \mca{W}$ are characterized by the conditions 
\[\iota^* F=F \text{ (as a set) and } F\approx \varepsilon^* \Theta \text{ or }
\eta^*\Theta, \]
where $\approx$ is the algebraic equivalence. 
Moreover every $F\in \mca{W}$ passes through distinct Weber hexads each other. Hence we 
obtain $32$ Weber hexads from $\mca{W}$. 
\end{lem}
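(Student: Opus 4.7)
I split Lemma \ref{cha} into the characterization and the distinctness/count claim, both of which reduce to facts already established in the paper. For the characterization, the forward direction is immediate: each $F \in \mca{W}$ is by definition of the form $\varepsilon^* \Theta_\beta$ or $\eta^* \Theta_\beta$, so lies in the claimed algebraic equivalence class, and by Proposition \ref{weber} passes through a Weber hexad of six $2$-torsion points, whence $\iota^* F = F$ by the preceding lemma. For the converse, I fix $\beta_0 \in S(C)$ and suppose $F \approx \varepsilon^* \Theta_{\beta_0}$ with $\iota^* F = F$ (the $\eta$-case is analogous). Because $\varepsilon$ is an automorphism of $J(C)$, $\varepsilon^* \Theta_{\beta_0}$ is a principal polarization, so any effective divisor in its algebraic equivalence class equals $t_a^* \varepsilon^* \Theta_{\beta_0}$ for a unique $a \in J(C)$. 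Using that $\iota$ is central (hence commutes with $\varepsilon$) and that $\iota^* \Theta_{\beta_0} = \Theta_{\beta_0}$ (symmetric theta divisor), the divisor $\varepsilon^* \Theta_{\beta_0}$ is itself $\iota$-invariant; combined with $\iota^* F = F$ and the identity $\iota^* t_a^* = t_{-a}^* \iota^*$, this forces $t_{2a}^* (\varepsilon^* \Theta_{\beta_0}) = \varepsilon^* \Theta_{\beta_0}$, so $2a \in K(\varepsilon^* \Theta_{\beta_0}) = 0$. Thus $a \in J(C)_2$, and using $\varepsilon \circ t_a = t_{\varepsilon a} \circ \varepsilon$ one computes $F = \varepsilon^* \Theta_{\beta_0 + \varepsilon a}$, which lies in $\mca{W}$ since $\varepsilon a \in J(C)_2$ makes $\beta_0 + \varepsilon a$ a theta characteristic.

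For distinctness I pull each $F \in \mca{W}$ back to the Kummer side. Let $\overline F \subset \overline X$ be its image under $J(C) \to \overline X$ and $\bar F \subset X$ the strict transform. Since $F \to \overline F$ is double, ramified exactly at the six $2$-torsion points of $F$ (the Weber hexad $W$), Riemann-Hurwitz forces $\overline F$ to be rational and $\bar F$ to be a smooth $(-2)$-curve on $X$. From (\ref{3}) one reads $(\bar F \cdot H) = (F \cdot \Theta) = 3$, while clearly $(\bar F \cdot N_\alpha) = 1$ if $\alpha \in W$ and $0$ otherwise. In particular $(\bar F \cdot L) = 0$ for $L = 2H - \sum_{\alpha \in W} N_\alpha$, so $\bar F$ is contracted by the Hessian morphism $\varphi_{|L|}$ and satisfies the hypotheses of Proposition \ref{11th}. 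The uniqueness clause there forces $\bar F$, and hence $F$, to be determined by $W$, so distinct $F, F' \in \mca{W}$ give distinct Weber hexads. Finally $|\mca{W}| = 32$: within each of the two families the $16$ translates have pairwise distinct linear equivalence classes (triviality of the stabilizer $K$), and the two families lie in different NS classes since $\varepsilon^2 \ne \eta^2$.

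The main obstacle I anticipate is the distinctness between an $\varepsilon$-type and an $\eta$-type curve: by (\ref{3}) these meet in $(\varepsilon^* \Theta \cdot \eta^* \Theta) = 7$ points, so sharing a six-point Weber hexad is not precluded by naive intersection theory on $J(C)$. The resolution is precisely to transport the problem to $X$ and invoke the uniqueness in Proposition \ref{11th}, which itself rests on the classical Steiner construction of the twisted cubic through six general points in $\bb{P}^3$.
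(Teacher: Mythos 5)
Your proof is correct and follows essentially the same route as the paper: the converse of the characterization is the same translate-plus-$\iota$-invariance argument forcing the translation $a$ to be $2$-torsion, and the distinctness of the hexads is obtained, exactly as in the paper, by passing to the contracted $(-2)$-curve on $X$ and invoking the uniqueness of the twisted cubic in Proposition \ref{11th}. You make explicit a few steps the paper leaves implicit (the identification $F=\varepsilon^*\Theta_{\beta_0+\varepsilon a}$, the numerical data of $\bar F$ on $X$, and the separation of the $\varepsilon$- and $\eta$-families via their N\'eron--Severi classes rather than the intersection number $7$), but there is no substantive difference in approach.
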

\begin{proof}
It is clear that $F\in \mca{W}$ satisfies the conditions. Conversely let $F$ 
satisfy the conditions. By the algebraic equivalence and $h^0 (\mca{O} (F))=1$, 
$F$ is a translate of some pullback 
of theta divisor: $F=\varepsilon^* \Theta_{\beta}+ \gamma$, $\gamma\in J(C)$. 
For any $x\in \varepsilon^* \Theta_{\beta}$, $-x\in \varepsilon^* \Theta_{\beta}$
and the former condition implies $-(-x+\gamma)\in F$, 
hence $x\in  \varepsilon^* \Theta_{\beta}+ 2\gamma$. Thus $2\gamma =0$. 

The last assertion follows from Proposition \ref{11th}. In fact,
since $(\varepsilon^* \Theta_{\beta}, \eta^* \Theta_{\beta})=(\varepsilon^* \Theta_{\beta}, 3\Theta_{\beta}-\varepsilon^* \Theta_{\beta})=7$, 
there are 
$32$ curves in $\mca{W}$.  
Let $F\in \mca{W}$. Then by the conditions, it corresponds to the unique twisted cubic curve 
in Proposition \ref{11th}. They are determined by the six nodes of $\overline{X}$. 
Hence $F$ can be recovered from the Weber hexad.
\end{proof}

Now we arrive at the following theorem.
\begin{thm}\label{main}
Let $C$ be a curve of genus two and $(X,W)$ its Jacobian Kummer surface and a Weber hexad on it.
Then the following conditions are equivalent.
\begin{enumerate}
\item The Hessian model $X_W$ acquires the $11$th node.
\item The Hutchinson-Weber involution $\sigma_W$ degenerates in the sense it acquires fixed loci.
\item The unique twisted cubic $\overline{E}$ passing through the nodes 
$\{n_{\alpha}\}_{\alpha\in W}$ of $\overline{X}$
lies on the Kummer quartic surface $\overline{X}$. (Here
the strict transform $E\subset X$ satisfies the relations in Proposition \ref{11th}.)
\item The Jacobian $J(C)$ is a Comessatti surface and one curve $\Xi$ among the 
set $\mca{W}$, (\ref{32}),
passes through the $2$-torsion points corresponding to $W$.
\item In the double plane model (Proposition \ref{H-N}) projected from 
one node $n_{w_0}\ (w_0\in W)$, there exists an additional conic $E'\subset \bb{P}^2$ which passes
through the vertices of the pentagon formed by five images of $\{n_w; w\in W-\{w_0\}\}$ 
and tangent to the remaining branch line.
For example, when $W=\{0,12,23,34,45,51\}$ and $w_0=0$ as in Proposition \ref{H-N}, 
then the pentagon is formed by $l_1, \cdots, l_5$ and the last line is $l_6$. 
\end{enumerate}
\end{thm}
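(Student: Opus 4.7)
The plan is to derive the equivalences from results already established where possible, and to fill the gap between the geometric and arithmetic conditions via Humbert's theorem on abelian surfaces. The equivalence (1)$\Leftrightarrow$(2) is exactly Proposition \ref{41}. For (1)$\Leftrightarrow$(3), Proposition \ref{11th} reduces the existence of the eleventh node of $X_W$ to the existence of the $(-2)$-curve $E$ satisfying the listed intersection relations; the image $\overline{E}\subset \overline{X}$ is then a degree-three rational curve through the six nodes $\{n_\alpha\}_{\alpha\in W}$, which the Steiner construction identifies with the unique twisted cubic determined by those nodes.

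The heart of the argument is (3)$\Leftrightarrow$(4). For (3)$\Rightarrow$(4) I will lift the twisted cubic $\overline{E}\subset \overline{X}$ along the quotient $\pi\colon J(C)\to \overline{X}$ to the curve $F:=\pi^{-1}(\overline{E})\subset J(C)$. Since $\overline{E}$ meets the singular locus of $\overline{X}$ precisely at the six nodes of $W$, the cover $F\to \overline{E}\simeq \bb{P}^1$ is a double cover ramified at exactly six points, so by Riemann--Hurwitz $F$ is a smooth irreducible genus-two curve, and $\iota^*F=F$ by construction. Using $\pi^*H\sim 2\Theta$ with the projection formula gives $(\Theta,F)=(H,\overline{E})=3$, while adjunction on $J(C)$ gives $(F^2)=2$. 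The Humbert invariant $(\Theta,F)^2-(\Theta^2)(F^2)=9-4=5$ places $J(C)$ on the Humbert surface of discriminant five, which is the Comessatti locus; squarefreeness of $5$ rules out nonmaximal orders as well as orders in other real quadratic fields. Thus $J(C)$ is Comessatti, and Lemma \ref{cha} identifies $F$ as an element of $\mca{W}$ passing through the $2$-torsion points of $W$.

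Conversely, for (4)$\Rightarrow$(3), a curve $\Xi\in\mca{W}$ through the $2$-torsion points of $W$ descends under $\pi$ to an irreducible rational curve $\overline{\Xi}\subset\overline{X}$ (since $\iota|_\Xi$ is the hyperelliptic involution with six fixed points) of degree $(H,\overline{\Xi})=(\Theta,\Xi)=3$ passing through the six nodes of $W$; by Steiner uniqueness $\overline{\Xi}$ is the twisted cubic of (3), which therefore lies on $\overline{X}$. Finally (4)$\Leftrightarrow$(5) is Humbert's classical theorem transported to the dual projective plane, the branch points $p_1,\ldots,p_6$ of $C\to \bb{P}^1$ being projectively dual to the branch lines $l_1,\ldots,l_6$ of Proposition \ref{H-N}; the inscribed conic of the classical statement dualizes to the conic $E'$ of (5), and compatibility with the specific Weber hexad $W$ is handled by the dual-six equivalence of Section \ref{hess}.

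The main obstacle I anticipate is the clean invocation of the Humbert-surface characterization in (3)$\Rightarrow$(4)---matching the discriminant-five intersection data with the maximal order $\mca{O}_{\bb{Q}(\sqrt{5})}$ inside $\End(J(C))$ and showing this data actually forces real multiplication rather than some accidental numerical coincidence---together with keeping track of the precise relationship between the chosen twisted cubic and the specific curve $F\in\mca{W}$ via Lemma \ref{cha}. Both are ultimately bookkeeping once the Humbert-surface reference is in place.
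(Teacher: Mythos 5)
Your skeleton agrees with the paper's for the core equivalences: (1)$\Leftrightarrow$(2) is Proposition \ref{41}, the reduction of (1) and (2) to the twisted cubic of (3) is Proposition \ref{11th}, and (3)$\Rightarrow$(4) proceeds, as in the paper, by lifting $\overline{E}$ to a genus-two curve on $J(C)$ meeting $\Theta$ in three points and then invoking Lemma \ref{cha}. The one genuinely different ingredient is how you extract ``Comessatti'' from this intersection data: you form the Humbert invariant $(\Theta,F)^2-(\Theta^2)(F^2)=9-4=5$ and appeal to the discriminant-five Humbert surface, whereas the paper stays inside $J(C)$ and uses the isomorphism (\ref{symm}) $NS(J(C))\simeq\End^{\text{sym}}(J(C))$ to produce a symmetric endomorphism $\varphi$ with $\varphi^2-3\varphi+1=0$, whose roots generate the maximal order directly. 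These are the same computation in different packaging, but the paper's version is self-contained and hands you the explicit automorphism $\varepsilon$ that is reused in (\ref{32}) and Lemma \ref{cha}, so it buys more downstream. The soft spot in your write-up is condition (5): you route it through the classical Humbert theorem plus projective duality, but that theorem only asserts the inscribed conic exists \emph{for a suitable labeling}, and the dual-six equivalence (Proposition \ref{six}) relates distinct hexads to one another rather than pinning the classical conic to the specific $W$ and $w_0$ appearing in (5); as written, that bookkeeping is not closed. The paper instead proves (3)$\Leftrightarrow$(5) in one line: $E'$ is literally the image of the curve $E$ of (3) under the double-plane projection of Proposition \ref{H-N} from $n_{w_0}$. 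Since you have already established (3) with the specific hexad, you should substitute this direct projection argument for your (4)$\Leftrightarrow$(5) step; everything else stands.
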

\begin{proof}
(1)$\Leftrightarrow$(2) follows from Proposition \ref{41}.
(2)$\Leftrightarrow$(3) follows from Proposition \ref{11th}.

(3)$\Rightarrow$(4): The inverse image $\Xi\subset J(C)$ of $\overline{E}$ is a genus two curve
with $(\Xi, \Theta)=3$ since $\overline{E}$ is a cubic curve. Then the endomorphism $\varphi$ 
corresponding to the divisor $\Xi$ in the isomorphism (\ref{symm}) (which holds in general)
satisfies the relation $\varphi^2 -3
\varphi +1=0$, hence $J(C)$ is Comessatti. By construction $\Xi$ corresponds to some element
in $\mca{W}$, by Lemma \ref{cha}.
(4)$\Rightarrow$(3) is already mentioned in the proof of Lemma \ref{cha}.

(3)$\Leftrightarrow$(5): These correspond to each other 
as $E'$ is the image of $E$ by the projection $X\rightarrow \bb{P}^2$.
\end{proof}
\begin{rem}
The proof of Humbert's theorem in \cite{vdg} covers (3)$\Leftrightarrow$
(4)$\Leftrightarrow$(5) except for mentioning Weber hexads.
\end{rem}
\begin{prop}\label{six}
If $W$ and $W'$ are equivalent as dual six, then the conditions of previous theorem
for $W$ and $W'$ are equivalent.
\end{prop}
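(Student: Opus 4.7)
The plan is to reduce Proposition \ref{six} to condition (2) of Theorem \ref{main} and use the fact that conjugate involutions on $X$ simultaneously have (or do not have) fixed loci. The equivalence as dual six was set up precisely so that equivalent Weber hexads give conjugate HW involutions; once $\sigma_{W'} = \phi \sigma_W \phi^{-1}$ for some $\phi \in \operatorname{Aut}(X)$, one immediately has $\operatorname{Fix}(\sigma_{W'}) = \phi(\operatorname{Fix}(\sigma_W))$, so condition (2) holds for $W$ if and only if it holds for $W'$. Theorem \ref{main} then propagates the equivalence to conditions (1), (3), (4) and (5).

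It suffices to verify the conjugation on the two generators of the equivalence relation, namely (i) $W \sim W + \alpha$ for $\alpha \in J(C)_2$, and (ii) $G \ominus R \sim G \ominus R^{\perp}$ when $G \cap R = \{0\}$. For (i), I take $\phi$ to be the translation $t_\alpha$ of Proposition \ref{switch}; the relations $t_\alpha^* H = H$ and $t_\alpha^* N_\beta = N_{\alpha+\beta}$ give $t_\alpha^* L = 2H - \sum_{\beta \in W+\alpha} N_\beta$, so $t_\alpha$ identifies the Hessian embedding of $(X, W)$ with that of $(X, W+\alpha)$ up to a permutation of the Sylvester coordinates $s_i$, and the formula $s_i \mapsto \lambda_i/s_i$ automatically intertwines the two HW involutions.

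For (ii), the conjugating automorphism is an appropriate switch $\tau_{\beta_0}$ of Proposition \ref{switch}. Tracing the action $N_\alpha \mapsto T_{\alpha + \beta_0}$ through the explicit divisors $S_i$ appearing in the proof of Proposition \ref{hessian}, one checks that $\tau_{\beta_0}^* L' = L$ where $L' = 2H - \sum_{\beta \in W'} N_\beta$; combined with the intrinsic description of $\sigma_W$ on the Hessian model, this yields $\sigma_{W'} = \tau_{\beta_0} \sigma_W \tau_{\beta_0}^{-1}$. Crucially, both Proposition \ref{switch} and Proposition \ref{hessian} are valid for every Jacobian Kummer surface without any genericity hypothesis on $C$, so the conjugation identity persists unconditionally.

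The main obstacle is the computation for generator (ii): whereas translations act transparently on the configuration of nodes, the switch exchanges the roles of $N_\alpha$ and $T_\beta$, so one has to verify the combinatorial compatibility between the decomposition $W = G \ominus R$ and $W' = G \ominus R^\perp$ by working through the incidence pattern of the $N_\alpha$ and $T_\beta$ inside $S_1, \ldots, S_5$. As a cleaner alternative, for (i) one may invoke condition (4) directly, since $\Xi \in \mathcal{W}$ and $2\alpha = 0$ imply $\Xi + \alpha \in \mathcal{W}$ by Lemma \ref{cha}; this gives a conceptual proof in the translation case and isolates the whole difficulty of the proposition into the single Rosenhain-flip generator.
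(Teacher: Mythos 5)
Your route is genuinely different from the paper's. You work through condition (2), establishing that equivalent Weber hexads give HW involutions that are conjugate by an actual automorphism of $X$ (a translation or a switch), and then transport fixed loci. The paper instead works through condition (3): the curve $E$ of Proposition \ref{11th} is characterized purely by the intersection numbers $(E,H)=3$, $(E,N_\alpha)=1$ for $\alpha\in W$ and $(E,N_\alpha)=0$ otherwise, and Proposition \ref{switch} gives the explicit action of translations and switches on $H^2(X,\bb{Z})$; one simply checks that the image $\sigma_{\alpha_0}(E)$ or $\sigma_{\beta_0}(E)$ satisfies the same relations for the equivalent hexad $W'$. This is a short lattice-theoretic computation valid for every $C$, with no need to compare the two Hessian embeddings or the two Cremona involutions. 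What your approach buys is a stronger and more geometric statement (conjugacy of the involutions themselves, not just simultaneous degeneracy); what it costs is exactly the step you flag as the ``main obstacle.''

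That step is where your argument is not yet closed, in two respects. First, the verification that a switch carries $|L|=|2H-\sum_{\alpha\in W}N_\alpha|$ to $|L'|$ for $W'=G\ominus R^{\perp}$ is asserted but not carried out; since the switch sends $N_\alpha\mapsto T_{\alpha+\beta_0}$, this requires computing $(E,T_\beta)$ from the relation $2T_\beta\sim H-\sum_{(N_\alpha,T_\beta)=1}N_\alpha$ and matching the resulting incidence pattern with $R^{\perp}$ --- precisely the computation the paper's proof performs implicitly on the class of $E$. Second, even granting $\tau_{\beta_0}^*L'=L$, you only get a projective-linear isomorphism $X_W\cong X_{W'}$; to conclude $\sigma_{W'}=\tau_{\beta_0}\sigma_W\tau_{\beta_0}^{-1}$ you must know that the Cremona involution $s_i\mapsto\lambda_i/s_i$ is intrinsic to the embedded quartic (e.g.\ because the configuration of the ten nodes $P_{ijk}$ and ten lines $L_{ij}$ pins down the coordinate hyperplanes up to permutation and rescaling, including in the degenerate case with an eleventh node). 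This is true but needs to be said; the paper's Remark on the uniqueness of the Sylvester form only covers generic cubics. Your alternative for the translation generator via condition (4) and Lemma \ref{cha} is clean and correct, but, as you note, it does not touch the switch generator, which is where all the content of the proposition lies.
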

\begin{proof}
The condition (3) is the easiest translated into this proposition.
By using Proposition \ref{switch}, we can see easily that the images 
$\sigma_{\alpha_0} (E), \sigma_{\beta_0}(E)$ (by translations and switches) 
satisfy the conditions in Proposition \ref{11th} for other equivalent $W'$s. 
\end{proof}

\section{Periods}\label{mod}

General HW involutions $\sigma_W$ are fixed-point-free, hence they determine Enriques surfaces.
The moduli of Enriques surfaces obtained in this way is isomorphic to
an open set of the moduli 
of pairs $(X,W)$ where $X$ is a Jacobian Kummer surface and $W$ is a Weber hexad, 
considered modulo equivalence as dual six. By what we have studied, we can describe the 
boundary divisor consisting of Kummer surfaces of Comessatti Jacobians explicitly.\\

First we recall the periods of Jacobian Kummer surfaces.
We fix a lattice $T=U(2)\oplus U(2) \oplus \bracket{-4}$, which is isomorphic to 
the transcendental lattice of Picard-general Jacobian Kummer surfaces.
Recall that $T$ has a unique embedding into a $K3$ lattice $L_{K3}$. We formally take 
a $\bb{Z}$-generator $\{N_{\alpha}, T_{\beta}\}$ of the orthogonal complement $NS$ of $T$
analogous to that in Section \ref{JK}. Let $\Phi=\sum (N_{\alpha}+T_{\beta})/4\in L_{K3}$. 
Under these notation, we have the following criterion.
\begin{prop}(\cite[Theorem 6.3]{nikulin74}) 
Let $X$ be a $K3$ surface. Then $X$ is isomorphic to a Jacobian Kummer surface 
if and only if there exists a marking $H^2 (X, \bb{Z})\stackrel{\sim}{\rightarrow} L_{K3}$ 
inducing an embedding $T_X \subset T$ such that: under this marking,
there exists no $(-2)$-element $E$ in $NS(X)$ which is orthogonal to $\Phi$. 
\end{prop}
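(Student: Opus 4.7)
The plan is to prove both directions using the global Torelli theorem for K3 surfaces combined with Nikulin's earlier recognition theorem for Kummer surfaces. The key starting observation is that $T = U(2)^{\oplus 2} \oplus \langle -4 \rangle$ is precisely the transcendental lattice of a Picard-general Jacobian Kummer surface, and that the abstract element $\Phi \in L_{K3}$ corresponds (under any such marking) to the class $2H - \frac{1}{2}\sum_{\alpha} N_\alpha$, where $H$ is the Kummer quartic polarization of Proposition \ref{H}. Indeed, summing the relation $H \sim 2T_\beta + \sum_{(N_\alpha,T_\beta)=1} N_\alpha$ over all sixteen $\beta$ and using the incidence numerics gives $\sum T_\beta \sim 8H - 3\sum N_\alpha$, from which the formula for $\Phi$ is immediate.

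For the forward direction, suppose $X = Km(J(C))$. The explicit $(16)_6$-configuration from Section \ref{JK} gives concrete smooth rational curves $\{N_\alpha, T_\beta\} \subset X$, whose classes generate a sublattice isomorphic to the abstract $T^\perp \subset L_{K3}$; sending these to their abstract counterparts and extending by Nikulin's primitive embedding theorem yields the required marking $H^2(X, \bb{Z}) \stackrel{\sim}{\rightarrow} L_{K3}$. That the transcendental lattice $T_X \cong T_{J(C)}(2)$ embeds primitively into $T$ follows from Nikulin's general computation of transcendental lattices of Kummer surfaces. To rule out a $(-2)$-element $E \in NS(X)$ orthogonal to $\Phi$, one verifies via $\Phi = 2H - \frac{1}{2}\sum N_\alpha$ that $(N_\alpha, \Phi) = 1$ and $(T_\beta, \Phi) = 5$, and then argues that any further $(-2)$-class with $(E, \Phi) = 0$ must have $(E, H) = 0$ and lie in the Kummer lattice, contradicting the fact that $\frac{1}{2}\sum N_\alpha$ is the only non-trivial $(-2)$-element relation there.

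For the reverse direction, given a marking with the stated properties, one proceeds in two stages. First, the primitive embedding of the Kummer lattice $K$ into $NS(X)$ (recovered abstractly from the $\{N_\alpha\}$ together with the standard half-sum relations) triggers Nikulin's recognition theorem, producing an abelian surface $A$ with $X \cong Km(A)$ and $T_A(2) \cong T_X \subset T$. Second, the embedding $T_X \subset T$ together with the absence of $(-2)$-elements orthogonal to $\Phi$ forces the principal polarization on $A$ (dual to $\Phi$ via the Kummer construction) to be irreducible; one then recovers $A \cong J(C)$ by the Torelli theorem for principally polarized abelian surfaces. The role of $\Phi$ is precisely to exclude $A = E_1 \times E_2$: for such $A$ the Kummer surface $Km(A)$ carries classes coming from the two projection fibrations, and a direct computation shows these yield a $(-2)$-class perpendicular to $\Phi$.

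The main obstacle is the reverse direction, and specifically the bookkeeping that distinguishes Jacobian Kummer surfaces from product Kummer surfaces via the $\Phi$-condition. Once that lattice comparison is established, the remainder is an essentially formal application of the surjectivity and injectivity of the period map for K3 surfaces together with the classical Torelli theorem for abelian surfaces.
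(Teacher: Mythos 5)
This proposition is not proved in the paper at all: it is quoted verbatim from Nikulin (\cite[Theorem 6.3]{nikulin74}) and used as a black box to set up the period description in Section \ref{mod}. So there is no in-paper argument to compare yours against; I can only assess your outline on its own terms. Its architecture is the standard (and essentially Nikulin's) one --- recognition of Kummer surfaces via the Kummer lattice, Torelli for $K3$'s and for principally polarized abelian surfaces, and the class $\Phi$ serving to separate irreducible polarizations from products --- and your identification $\Phi = 2H - \frac{1}{2}\sum_\alpha N_\alpha$ together with the check that a product $E_1\times E_2$ produces a $(-2)$-class orthogonal to $\Phi$ (the image of $E_1\times\{0\}$ has $(F,H)=1$ and meets four $N_\alpha$, so $(F,\Phi)=0$) are both correct.

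However, the step you flag as routine verification in the forward direction is where the writeup actually breaks. First, $(T_\beta,\Phi)=1$, not $5$: a trope is a conic for $|H|$ and meets six of the $N_\alpha$, so $(T_\beta, 2H-\tfrac12\sum N_\alpha)=4-3=1$. Second, the assertion that a $(-2)$-class $E$ with $(E,\Phi)=0$ ``must have $(E,H)=0$ and lie in the Kummer lattice, contradicting the fact that $\frac12\sum N_\alpha$ is the only non-trivial $(-2)$-element relation there'' is not an argument; orthogonality to $\Phi$ alone imposes only one linear condition and forces nothing of the sort for an abstract lattice element. The correct route is: by Riemann--Roch one may take $E$ effective; since $\Phi=\frac14\sum(N_\alpha+T_\beta)$ is a positive combination of irreducible curves each meeting $\Phi$ positively, $(E,\Phi)=0$ reduces to an irreducible $(-2)$-curve $C$ distinct from every $N_\alpha$ and $T_\beta$ with $(C,N_\alpha)=(C,T_\beta)=0$ for all $\alpha,\beta$; then $(C,H)=0$ by the relation $H\sim 2T_\beta+\sum_{(N_\alpha,T_\beta)=1}N_\alpha$, so $C$ is contracted by $|H|$ to a seventeenth singular point of the quartic $\overline{X}$, which is impossible because $\iota$ has exactly sixteen fixed points on $J(C)$. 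Without this reduction to irreducible components and the geometric contradiction, the forward direction is not established, and the converse direction likewise rests on unproved appeals to ``Nikulin's recognition theorem'' and the surjectivity of the period map that would need to be spelled out for this to count as a proof rather than a plan.
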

Let us compute the obstruction $E$. 
We put $E=E_{NS}+E_T$ according to the decomposition $L_{K3, \bb{Q}}= NS_{\bb{Q}} 
\oplus T_{\bb{Q}}$. 
After some computation, we obtain $E_{NS}= \pm H/4 \pm (\sum_{\alpha\in R} N_{\alpha})/2$,
where $R$ is a Rosenhain tetrad. Correspondingly we have $(E_T^2)=-1/4$.
Conversely, for any $E_T\in T^*$ with $(E_T^2)=-1/4$, it is easy to see that there exists
an element $E_{NS}\in NS^*$ such that 
$E_{NS}+ E_T\in L_{K3}$ and $((E_{NS}+ E_T)^2)=-2$. (In fact any $(1/4)$-element in the discriminant group $NS^*/NS$ 
corresponds to a patching element of a switch of an even theta characteristic, 
\cite[Section 5]{ohashi09}.)
Let 
\[ \mca{E}= \{e \in T^* \mid (e^2)=-1/4 \}\]
and $H_e\subset T_{\bb{C}}$ be the hyperplane orthogonal to $e\in \mca{E}$. 
Since $T$ has a unique primitive embedding into $L_{K3}$, we obtain 
\begin{prop}\label{JKmoduli}
The moduli space $\mca{JKS}$ of Jacobian Kummer surfaces is isomorphic to the period domain
\[\mca{D}(T) -\cup_{e\in \mca{E}} H_e, \text{ where }
\mca{D}(T)
= \{ [\omega ]\in \bb{P} (T_{\bb{C}})\mid (\omega^2)=0, 
(\omega,\overline{\omega})>0 \}\]
divided by the arithmetic group $O(T)$. 
\end{prop}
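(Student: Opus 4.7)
The plan is to derive the description as a standard application of surjectivity and global Torelli for $K3$ surfaces, combined with Nikulin's criterion recalled just above. First I would note that since $T$ admits a unique primitive embedding into $L_{K3}$, a Jacobian Kummer surface $X$ carries a marking with $T_X\subset T$ which is unique up to the action of $O(T)$; here one uses that every isometry of $T$ extends to $L_{K3}$ (by Nikulin's uniqueness results for primitive embeddings), so changes of marking preserving the condition $T_X\subset T$ exhaust exactly $O(T)$. Under such a marking the period $[\omega_X]$ lies in $\mca{D}(T)\subset\bb{P}(T_{\bb{C}})$, giving a well-defined map from the moduli to $\mca{D}(T)/O(T)$.

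For surjectivity I would invoke the surjectivity of the period map for marked $K3$ surfaces: given any $[\omega]\in \mca{D}(T)$, there is a marked $K3$ surface $X$ with period $[\omega]$, and automatically $T_X\subset T$ since $\omega\in T_{\bb{C}}$. It then remains to determine when $X$ is a Jacobian Kummer surface. By Nikulin's criterion this happens exactly when there is no $(-2)$-element $E\in NS(X)$ with $(E,\Phi)=0$. To translate this obstruction into a hyperplane condition, write $E=E_{NS}+E_T$ under the formal decomposition $L_{K3,\bb{Q}}=NS_{\bb{Q}}\oplus T_{\bb{Q}}$. Since $\omega_X\in T_{\bb{C}}$ is orthogonal to the formal $NS$, the condition $E\in NS(X)$ (equivalently $(E,\omega_X)=0$) reduces to $(E_T,\omega_X)=0$, i.e.\ $[\omega_X]\in H_{E_T}$.

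Using the classification of such $E$ recalled in the excerpt, the constraints $(E^2)=-2$, $(E,\Phi)=0$ and $E_{NS}+E_T\in L_{K3}$ force $E_{NS}=\pm H/4\pm(\sum_{\alpha\in R}N_\alpha)/2$ for a Rosenhain tetrad $R$, and correspondingly $(E_T^2)=-1/4$, so $E_T\in\mca{E}$; conversely, every $e\in\mca{E}$ lifts to such an obstruction by choosing the matching patching element in the discriminant group via \cite[Section 5]{ohashi09}. Therefore $X$ is Jacobian Kummer iff $[\omega_X]\notin\cup_{e\in\mca{E}}H_e$, and dividing by the marking ambiguity $O(T)$ yields the claimed description. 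The main obstacle is this last bijective correspondence: one must identify $(1/4)$-elements of the discriminant group $T^*/T$ with the patching elements coming from switches of even theta characteristics, so that the abstract elements $e\in\mca{E}$ extend to bona fide integral $(-2)$-classes in $L_{K3}$, and vice versa.
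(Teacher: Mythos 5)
Your proposal follows essentially the same route as the paper: Nikulin's criterion together with the computation of the obstruction class $E=E_{NS}+E_T$ (yielding $E_{NS}=\pm H/4\pm(\sum_{\alpha\in R}N_\alpha)/2$ and $(E_T^2)=-1/4$), the converse lifting of any $e\in\mca{E}$ via the discriminant-group/patching-element correspondence, and the uniqueness of the primitive embedding of $T$ into $L_{K3}$ to reduce the marking ambiguity to $O(T)$. The paper leaves the surjectivity of the period map and global Torelli implicit, but otherwise your argument matches its (terse) derivation.
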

We remark that we can show $O(T)$ acts 
on $\mca{E}$ transitively, hence the divisor removed is irreducible. The proof
is the same as that of Lemma \ref{tr} below.\\

Next we consider the Weber hexads.
For the time being, suppose that $NS, T$ are identified with the Neron-Severi $NS(X)$ 
and the transcendental lattice $T_X$ of a Picard-general surface $X$. 
Recall that the discriminant group $T_X^*/T_X$ has exactly $6$ cyclic subgroups $C_W$ 
of order $4$, 
whose generators have the norm $(3/4)$ mod $2\bb{Z}$. 
These subgroups are exactly those arising as the patching subgroups of 
HW involutions, \cite[Section 7]{ohashi09}. In other words they are one-to-one to the dual six.
The correspondence is given by 
\begin{equation}\label{WH}
\text{(the class of) $W$} \leftrightarrow C_W=
\biggl\langle \frac{3}{4}H-\frac{1}{2}\biggl( \sum_{\alpha\in W} N_{\alpha}\biggr) 
\biggr\rangle
\subset NS(X)^*/NS(X),
\end{equation}
via the sign-reversing isometry $NS(X)^*/NS(X)\simeq T_X^*/T_X$. 

We return to the general situation.
Let us fix one subgroup $C_0\subset T^*/T$ as above once and for all. 
\begin{dfn}
For a pair $(X,W)$ of a Jacobian Kummer surface and an equivalence class of Weber hexads,
a {\em{marking}} $\phi \colon H^2 (X, \bb{Z})\stackrel{\sim}{\rightarrow} L_{K3}$ 
is an isometry satisfying the following conditions:
\begin{itemize}
\item (Lattice polarization): $\phi^{-1} (NS)$ coincides with the sublattice of $NS(X)$ 
generated by the $(16)_6$ configuration. We denote this sublattice by $NS(X)'$.
\item The subgroup $C_W\subset NS(X)'^*/NS(X)'$ defined by (\ref{WH}) 
corresponds to $\phi^{-1} C_0$ via $NS(X)'^* / NS(X)' \simeq \phi^{-1} (T^*/T)$. 
\end{itemize}
\end{dfn}
Let $\Gamma$ be the subgroup of $O(T)$ whose induced action on $T^*/T$ 
stabilizes $C_0$. Clearly $\Gamma$ acts on the set of markings of a pair $(X,W)$ 
and the moduli space of $(X,W)$ is given by restricting the arithmetic group to $\Gamma$.
\begin{prop}
The moduli space $\mca{JKS}_W$ of 
Jacobian Kummer surfaces equipped with a Weber hexad, considered modulo 
the equivalence as dual six, is isomorphic to the period domain 
$\mca{D} (T)- \cup_{e\in \mca{E}} H_e$ divided by the arithmetic subgroup $\Gamma$. 
\end{prop}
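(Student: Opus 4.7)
The plan is to retrace the proof of Proposition \ref{JKmoduli}, adjoining the extra data of a Weber-hexad class via the discriminant form. For a triple $(X,W,\phi)$ consisting of a Jacobian Kummer surface $X$, a dual-six class of Weber hexads $W$, and a marking $\phi$ as in the definition, one sends it to the period point $[\phi(\omega_X)]\in \bb{P}(T_{\bb{C}})$. The lattice-polarization condition $\phi^{-1}(NS)=NS(X)'$ puts this period in $\mca{D}(T)$, and Proposition \ref{JKmoduli} guarantees it avoids every hyperplane $H_e$, $e\in\mca{E}$. Thus we obtain a period map from marked triples to $\mca{D}(T)-\cup_{e\in\mca{E}}H_e$.

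The next step is to identify the ambiguity in the marking. By the lattice-polarization condition, any two markings of $(X,W)$ differ on the transcendental side by an element of $O(T)$; the additional requirement that $\phi^{-1}C_0=C_W$, transported via the sign-reversing isometry $NS(X)'^*/NS(X)'\simeq T_X^*/T_X$ and $\phi$ to $T^*/T$, restricts the allowed transcendental isometries to those that stabilize $C_0$ in $T^*/T$. By definition this is precisely $\Gamma$, so the period map descends to a well-defined map $\mca{JKS}_W \to (\mca{D}(T)-\cup H_e)/\Gamma$.

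Surjectivity and injectivity on the quotient then follow from Proposition \ref{JKmoduli} together with the bijection (\ref{WH}). Given $[\omega]$ in the target, Proposition \ref{JKmoduli} produces a Jacobian Kummer surface $X$ with a marking $\phi_0$ whose $NS$-part is $NS(X)'$; pulling back $C_0$ gives a cyclic subgroup of order $4$ with the required norm in $NS(X)'^*/NS(X)'$, hence by (\ref{WH}) a unique dual-six class $W$ of Weber hexads, and the triple $(X,W,\phi_0)$ maps to $[\omega]$. Two triples with $\Gamma$-equivalent periods are identified via the Torelli theorem applied to the underlying lattice-polarized $K3$'s, and since $\Gamma$ preserves $C_0$ the resulting isomorphism preserves the Weber-hexad class. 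I expect the main obstacle to be the lattice-theoretic verification that the sign-reversing isomorphism $NS(X)'^*/NS(X)'\simeq T_X^*/T_X$ intertwines the combinatorial bijection (\ref{WH}) with the natural action of $\Gamma$ on subgroups of $T^*/T$, so that the $\Gamma$-orbit of a period faithfully encodes the dual-six class and nothing finer or coarser.
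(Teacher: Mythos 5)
Your argument is correct and follows essentially the same route as the paper, which simply observes that $\Gamma$ is exactly the group acting on the markings of a pair $(X,W)$ and that the moduli space is obtained by restricting the arithmetic group of Proposition \ref{JKmoduli} from $O(T)$ to $\Gamma$; your write-up just makes explicit the period map on marked triples, the Torelli step, and the role of the correspondence (\ref{WH}) that the paper leaves implicit.
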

\erase{If you are not sure, check $\{ (X,W, \phi)\} \stackrel{\sim}{\rightarrow}
\mca{D} (T)- \cup_{e\in \mca{E}} H_e$.}
By \cite[Lemma 3.3]{ohashi09}
the natural projection $\mca{JKS}_W\rightarrow \mca{JKS}$ is $6:1$ and corresponds to 
the forgetful map $(X, W)\mapsto X$.

Let us compute the locus of degenerate HW involutions.
The HW involution $\sigma_W$ degenerates if and only if 
there exists a curve $E\in H^2 (X,\bb{Z})$ 
as in Proposition \ref{11th}. From the relations there, the
element $E=E_{\phi^{-1}(NS)}+E_{\phi^{-1}(T)}$ satisfies 
$E_{\phi^{-1}(NS)}=(3/4)H-(\sum_{\alpha\in W} N_{\alpha})/2$, where $N_{\alpha}$ is 
the $(16)_6$-configuration on $X$.
Hence $e=\phi(E_{\phi^{-1}(T)})$ satisfies the conditions 
\begin{equation}\label{obs}
 e \in T^*, (e^2)=-5/4, \text{$e$ generates $C_0$ in $T^*/T$}.
\end{equation}
Conversely if such an element $e$ exists and orthogonal to the period 
under a marking (as a pair $(X,W)$), then by \cite[Section 7]{ohashi09}
we obtain a $(-2)$-element $E\in NS(X)$ satisfying the numerical conditions in Proposition 
\ref{11th}. By Riemann-Roch, nef and big property of $L$ and the Cauchy-Schwarz inequality,
$E$ is a sum of $(-2)$-{\em{curves}} and then Proposition \ref{11th} shows that $E$ 
is a class of irreducible $(-2)$-curve. Thus the above condition 
is also sufficient for the degeneration.

Let 
\[\mca{E}'= \{e\in T^* \mid (e^2)=-5/4\}.\]
\begin{lem}\label{tr}
$O(T)$ acts on $\mca{E}'$ transitively.
\end{lem}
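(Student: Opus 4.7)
The plan is to reduce the statement to an orbit question on the discriminant form $q_T$ of $T$, in the spirit of Eichler's criterion and Nikulin's theory of lattice embeddings. First, since the exponent of $T^*/T$ is $4$, we have $4T^* \subseteq T$, and the map $e \mapsto v := 4e$ identifies $\mca{E}'$ bijectively with
\[ \mca{V}' := \{ v \in T \mid (v^2) = -20,\ v/4 \in T^*,\ \text{and } v/4 \text{ has order exactly } 4 \text{ in } T^*/T \}. \]
A short check using that $T^*/T$ has exponent $4$ shows that for such $v$ the divisor $\gcd\{(v,u) : u \in T\}$ equals $4$, and the image $\bar v = v/4 \in T^*/T$ is one of the $12$ generators of the six Weber-hexad cyclic subgroups $C_W$ (of order $4$ and squared-norm $3/4 \pmod{2\bb{Z}}$), classified in (\ref{WH}) and Lemma \ref{ww}.

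Second, I would apply the Eichler/Nikulin orbit criterion for vectors in an indefinite even lattice: two elements $v_1, v_2 \in \mca{V}'$ lie in the same $O(T)$-orbit as soon as $\bar v_1$ and $\bar v_2$ agree under some element of the image of $O(T) \to O(q_T)$. Consequently the problem reduces to showing that this image acts transitively on the $12$ Weber-hexad generators of $T^*/T$.

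Third, the last transitivity I would verify using two explicit symmetries already present in the paper. The $\mathcal{S}_6$-action by permutation of the Weierstrass letters embeds into $O(T)$ (through its action on the transcendental lattice of a Picard-general Jacobian Kummer surface) and, by the "dual six" structure of Lemma \ref{ww}, permutes the six cyclic subgroups $C_W$ transitively. The involution $-\mathrm{id}_T \in O(T)$ swaps the two generators of each $C_W \simeq \bb{Z}/4$. Together these act transitively on the $12$ generators, completing the argument.

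The main obstacle is justifying the Eichler/Nikulin criterion for $T = U(2) \oplus U(2) \oplus \bracket{-4}$, which contains no literal hyperbolic $U$ summand and for which Nikulin's usual "$\rank(T) \geq \ell(q_T)+2$" hypothesis fails (here $\rank(T) = 5$ and $\ell(q_T) = 5$). I expect to circumvent this either by invoking Nikulin's general statement on primitive embeddings of $\bracket{-20}$ into $T$ with prescribed divisor, whose orbit is pinned down by the genus of the orthogonal complement, or by constructing the required stable isometries directly using the abundant isotropic vectors in $U(2) \oplus U(2)$, which provide enough room for an Eichler-style reflection argument.
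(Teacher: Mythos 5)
Your reduction from $e$ to $f=4e$ (primitive, $(f^2)=-20$, divisor $4\bb{Z}$) coincides with the paper's first step, and your diagnosis of the difficulty is accurate: the transitivity statement for such $f$ is exactly where your argument stops. Neither of your two proposed workarounds is carried out, and the first does not work as stated: knowing that two vectors have orthogonal complements in the same genus (or even isometric complements) does not by itself place them in one $O(T)$-orbit --- one must extend an isometry of the complements across the gluing in the discriminant group, and the obstruction to doing so is controlled by precisely the kind of hypothesis ($\rank \ge \ell(q)+2$, or a $U\oplus U$ summand) that you correctly observe fails for $T$. So as written the proof has a genuine gap at its central step.

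The paper closes this gap with a one-line observation you are missing: the bilinear form of $T=U(2)^{2}\oplus\bracket{-4}$ takes values in $2\bb{Z}$, so one may pass to the even lattice $T(1/2)=U^{2}\oplus\bracket{-2}$, which has the \emph{same} orthogonal group. Under this rescaling $f$ becomes a primitive vector of norm $-10$ and divisor $2$, and $T(1/2)$ genuinely contains $U\oplus U$, so the Eichler-type criterion (\cite[Proposition 3.7.3]{scattone}) applies directly; since the discriminant group of $T(1/2)$ is $\bb{Z}/2\bb{Z}$, the class of $f/2$ in it is forced by primitivity and there is nothing further to check. This also renders your second and third steps unnecessary: you need not match the classes $\bar e$ with the twelve generators of the subgroups $C_W$, nor prove transitivity of $\mca{S}_6$ and $-\id$ on them --- a step which, on your route, would additionally require justifying that the permutation action on letters realizes, inside the image of $O(T)\to O(q_T)$, the full symmetric group on the six subgroups.
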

\begin{proof} 
Instead of $e\in T^*$ we consider the element $f=4e\in T$ which is primitive, 
$(f^2)=-20$ and $(f, T)=4\bb{Z}$. Clearly the transitivity for $e$ follows from 
that for $f$. 

The bilinear form of the lattice $T$ is always even, hence 
the problem reduces to that in the lattice $T(1/2)=U^2\oplus \bracket{-2}$. Because 
it contains two hyperbolic planes, \cite[Proposition 3.7.3]{scattone} concludes the proof.
\end{proof}
\begin{cor}\label{tr2}
$\Gamma$ acts on the set 
\[\mca{E}'_0 = \{e\in T^* \mid (e^2)=-5/4, \text{ and $e$ generates $C_0$ in $T^*/T$}\}\]
transitively.
\end{cor}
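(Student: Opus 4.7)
The plan is to deduce the corollary as an essentially formal consequence of Lemma \ref{tr}. Given two elements $e, e' \in \mca{E}'_0$, Lemma \ref{tr} supplies an isometry $g \in O(T)$ with $g(e) = e'$; I want to argue that such a $g$ automatically lies in $\Gamma$, so that the $O(T)$-orbit through $e$ already coincides with its $\Gamma$-orbit when restricted to the subset $\mca{E}'_0$.

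For this, recall that every $g \in O(T)$ induces an isometry of the discriminant group $T^*/T$. The equation $g(e) = e'$ then implies that $g$ sends the class of $e$ modulo $T$ to the class of $e'$ modulo $T$. By the definition of $\mca{E}'_0$, both of these classes are generators of the cyclic subgroup $C_0 \subset T^*/T$ of order $4$. Consequently the cyclic subgroup $g(C_0)$, being generated by the image of the class of $e$, equals the cyclic subgroup generated by the class of $e'$, which is $C_0$ itself. Hence $g \in \Gamma$, and transitivity of $\Gamma$ on $\mca{E}'_0$ follows from the transitivity of $O(T)$ on $\mca{E}'$. The only potential ``obstacle'' is really the tautological observation that the setwise stabilizer condition defining $\Gamma$ is automatic for any isometry carrying one generator of $C_0$ onto another; no construction beyond Lemma \ref{tr} is needed.
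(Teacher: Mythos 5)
Your argument is correct and is exactly the content behind the paper's one-line proof (``This follows from the lemma by definition''): any $g\in O(T)$ with $g(e)=e'$ descends to $T^*/T$ and carries the generator $[e]$ of $C_0$ to the generator $[e']$, hence fixes $C_0$ setwise and lies in $\Gamma$. No difference in approach; your write-up just makes the tautology explicit.
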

\begin{proof}
This follows from the lemma by definition.
\end{proof}
Hence we obtain 
\begin{thm}\label{moduli}
The moduli space $\mca{CJKS}=\{(X,W)\}$ of Comessatti Jacobian Kummer surfaces 
which satisfy the conditions of Theorem \ref{main} is isomorphic to the quotient 
$(\cup_{e\in \mca{E}'_0} H_e - \cup_{e\in \mca{E}} H_e)/\Gamma$, which 
is in fact irreducible by the previous corollary. 

The moduli space of Enriques surfaces obtained by HW involutions is
given by 
\[(\mca{D}(T) -\cup_{e\in \mca{E}} H_e - \cup_{e\in \mca{E}'_0} H_e)/\Gamma.\]
\end{thm}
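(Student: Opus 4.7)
The plan is to assemble the preceding period computations into the claimed description. By the preceding proposition, $\mca{JKS}_W \simeq (\mca{D}(T) - \cup_{e\in\mca{E}} H_e)/\Gamma$, so it suffices to pick out, inside this quotient, the locus of pairs $(X,W)$ for which $\sigma_W$ degenerates. By Theorem \ref{main}, degeneration is equivalent to the existence of a $(-2)$-curve $E\subset X$ satisfying the relations of Proposition \ref{11th}, and the discussion preceding Lemma \ref{tr} has already carried out the lattice-theoretic translation: the $NS$-part of such $E$ must equal $(3/4)H-(1/2)\sum_{\alpha\in W}N_\alpha$, hence its $T$-part $e$ lies in $\mca{E}'_0$ and is orthogonal to the period $[\omega]$, i.e.\ $[\omega]\in H_e$.

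The nontrivial direction is the converse: starting from $e\in\mca{E}'_0$ orthogonal to $[\omega]$, produce an actual irreducible $(-2)$-curve $E$. First I would attach to $e$ the forced $NS$-component to obtain an integral $(-2)$-class on $X$; then Riemann--Roch on the K3 surface together with the nef and bigness of $L=2H-\sum_{\alpha\in W} N_\alpha$ and the Cauchy--Schwarz inequality produce an effective representative; finally Proposition \ref{11th} forces this representative to be a single irreducible smooth rational curve, since any decomposition would violate the uniqueness assertion there. This equivalence identifies $\mca{CJKS}$ with the preimage of $\cup_{e\in\mca{E}'_0}H_e$ inside $\mca{JKS}_W$, which yields the formula $(\cup_{e\in\mca{E}'_0} H_e - \cup_{e\in\mca{E}} H_e)/\Gamma$; complementing in $\mca{JKS}_W$ then gives the stated moduli of Enriques surfaces arising from HW involutions.

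For irreducibility I would invoke Corollary \ref{tr2}: $\Gamma$ acts transitively on $\mca{E}'_0$, so all hyperplanes $H_e$ with $e\in\mca{E}'_0$ are $\Gamma$-conjugate and their union descends to the image of a single $H_{e_0}$. Since $T$ has signature $(2,3)$, this $H_{e_0}$ is a type IV domain of dimension two, hence connected and irreducible; removing the codimension-one trace $\cup_{e\in\mca{E}} H_e \cap H_{e_0}$ preserves irreducibility of the open part.

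The main obstacle I foresee is the upgrade from a numerical $(-2)$-class to an actual irreducible $(-2)$-curve on $X$: the Riemann--Roch / nef-and-big / Cauchy--Schwarz chain is only alluded to in the paragraph preceding Lemma \ref{tr}, but it is the step where the analytic and lattice-theoretic pictures genuinely merge, and one must verify that no competing decomposition as a sum of $(-2)$-curves can occur. A secondary technical point is confirming that the divisor $\cup_{e\in\mca{E}'_0}H_e$ is not swallowed by the removed locus $\cup_{e\in\mca{E}}H_e$; this follows from the distinctness of the norms $-1/4$ and $-5/4$, ensuring that $\mca{CJKS}$ is a genuine codimension-one subvariety of $\mca{JKS}_W$ rather than an empty stratum.
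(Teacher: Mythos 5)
Your proposal is correct and follows essentially the same route as the paper, which derives the theorem directly from the preceding discussion: the period description of $\mca{JKS}_W$, the lattice-theoretic characterization of degeneration via elements $e\in\mca{E}'_0$ orthogonal to the period (including the same Riemann--Roch, nef-and-big, Cauchy--Schwarz argument to upgrade a numerical class to an irreducible $(-2)$-curve via Proposition \ref{11th}), and irreducibility from the transitivity in Corollary \ref{tr2}.
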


\section{An application to the patching subgroups}

This section aims at giving a better way of understanding \cite[Proposition 7.3]{ohashi09}
and reproving it. We hope there are other cases to which our ideas will be applicable.

We fix a Weber hexad $W$ once and for all.
First we recall the situation of \cite[Section 7]{ohashi09}. 
Let $X_1$ be a Picard-general Jacobian Kummer surface 
and $\sigma_{W,1}$ be the HW involution. 
The problem is to determine the patching subgroup 
$\Gamma_{\sigma_{W,1}}$ which was defined in \cite[Definition 2.2]{ohashi09}. 
To this end, we can use the degeneration of HW involutions we have studied in this paper. 

We consider a one-dimensional smooth family of Jacobian Kummer surfaces $f:X\rightarrow \Delta$ 
(in what follows the letters $X$ and $X_W$, $N_{\alpha}$, $\sigma_W$, 
etc. represents a {\em{family}}
of surfaces, divisors, automorphisms, etc.) and 
its associated Hessian model $X_W \rightarrow \Delta$ with fibers
\[X_{W,t} \colon \sum s_i=\sum \lambda_i(t)/s_i =0, \quad t\in \Delta,\]
where $\Delta$ is a small disk.
We can assume that the Hessian model $X_{W,1}$ of $X_1$ 
appears as some fiber (over $t=1$, say) and the central fiber 
$X_{W,0}$ has eleventh node $p$ while the other fibers have exactly ten nodes.
The HW involution $\sigma_W=\{\sigma_{W,t}\}_{t\in \Delta}$ 
acts on $X_W$ birationally and fiberwisely. 
Blowing up the ten (families of) nodes corresponding to $N_{\alpha}\ (\alpha\in J(C)_2-W)$,
we obtain the family $\widetilde{X}_W
\rightarrow \Delta$ whose fibers are smooth for $t\in \Delta -\{ 0 \}$
and $\widetilde{X}_{W,0}$ has one node $p$. 
This is the same situation as in \cite[Section 7]{burns-rapoport}.
\[X \stackrel{\pi}{\rightarrow} \widetilde{X}_W \rightarrow \Delta ,\text{ ($\pi$: small)}.\]

On $\widetilde{X}_W$, $\sigma_W$ acts biregularly and fiberwisely.
Let us denote by $\Gamma\subset X\times_{\Delta} X$ the graph of $\sigma_W$;
since $\pi$ is an isomorphism over $\Delta^*=\Delta -\{0\}$, $\Gamma$ is just
the closure of $\Gamma\mid_{\Delta^*}$. Let $\Gamma_t \subset X_t\times X_t$ 
be the fiber of $\Gamma\rightarrow \Delta$. We can think of
$\Gamma_{0}=\lim_{t\rightarrow 0} \Gamma_t$ as the limit in the Barlet space 
of $X\times_{\Delta} X$ as in \cite[VIII, Lemma 10.3]{BHPV}, \cite[Theorem 2,]{burns-rapoport}.
Hence the induced map on cohomology 
\[ [\Gamma_0]_* :H^2(X_0, \bb{Z}) \rightarrow H^2(X_0, \bb{Z}) \]
is the same as 
that of $[\Gamma_1]_*: H^2(X_1, \bb{Z}) \rightarrow H^2(X_1, \bb{Z})$ under the 
obvious trivialization of the local system $R^2 f_* \bb{Z}_X$. 

Clearly $\Gamma_t$ is the graph of HW involutions $\sigma_{W,t}$ 
for $t$ other than $0$.
But the point is that $\Gamma_0$ does not give the graph of HW involution 
$\sigma_{W,0}$, because $\sigma_{W,0}$ has fixed points and therefore its action on the cohomology 
cannot be the same as other $\sigma_{W,t}$. By \cite[VIII, Proposition 10.5]{BHPV}
$\Gamma_0$ is of the form $\Lambda_0+ E\times E$, where $\Lambda_0$ is the graph of $\sigma_{W,0}$
and $E\subset X_0$ is the fixed curve of $\sigma_{W,0}$, namely exceptional curve for $\pi$. 
Therefore the induced map is of the form 
\[
[\Gamma_0]_* = [\Lambda_0+ E\times E]_* : x \mapsto \sigma^*_{W,0} (x) + (x,E)E. 
\]
Since $E$ is the fixed curve of $\sigma_{W,0}$, it follows 
that $[\Gamma_0]_* = \sigma^*_{W,0} \circ r_E = r_E \circ \sigma^*_{W,0}$ where $r_E$ is 
the reflection in $E$. 

Let us return to the computation of the patching subgroup $\Gamma_{\sigma_{W,1}}$. 
We have seen that the action of $\sigma_{W,1}$ on the cohomology is the same 
as 
\[
\sigma_{W,1}^* = [\Gamma_1 ]_* = [\Gamma_0 ]_* = \sigma^*_{W,0} \circ r_E,
\]
where $\sigma_{W,0}$ is the degenerate HW involution. 
In particular $\sigma_{W,1}^* (E)=-E$ (this is not a contradiction since the cycle $E$ 
is not an algebraic cycle at $t=1$). Let us write $E$ as $E_{NS}+E_T$ according to 
the orthgonal decomposition over the rationals $H^2 (X_1, \bb{Q}) = NS(X_1)_{\bb{Q}}
\oplus T_{X_1,\bb{Q}}$. Using the relations in Proposition \ref{11th}, 
it is easy to see that $E_{NS}= (3/4)H_1-(\sum_{\alpha\in W} N_{\alpha ,1})/2$.
By the definition of the patching subgroup, $E_{NS}$ is the patching element.
Since we know that $\Gamma_{\sigma_{W,1}}$ is of order $4$, it is generated by the class of 
$E_{NS}$.

\erase{

{\exam} It is not always the case that $\sigma_W$ is free.
Let us consider the abelian surface $A$ whose intersection matrix 
of $NS(A)$ is 
$\displaystyle \left( \begin{array}{cc}
	2 & 3 \\
	3 & 2 
	\end{array} \right)$.
Since this hyperbolic lattice has a primitive embedding into $U^{\oplus 3}$,
such $A$ exists by \cite{shioda}.

To begin with, $NS(A)$ does not represent zero, so $A$ does not contain an elliptic 
curve. It follows that for any irreducible curve $C$ on $A$, $(C^2)\ge 2$ and 
the equality holds if and only if $C$ is a smooth curve of genus two.

Let us take a basis $C,C'$ of $NS(A)$ whose intersection matrix is as above.
By Riemann-Roch, we can assume that $C$ and $C'$ are both effective.
The fact above shows that $C$ and $C'$ are both smooth curves of genus two.
Hence $A$ is the Jacobian; we identify $A$ with $J(C)$,
choose a labeling of Weierstrass points of $C$ and identify $C\subset A$ 
with the theta divisor corresponding to theta characteristics.
We focus on the subgroup $A_2$ equipped with the Weil pairing as to $\mca{O}(2C)$. 
Note that Weil pairing is independent of the choice of labeling.

The embedding $C'\rightarrow A$ can be identified with the Abel-Jacobi map
of $C'$ up to translation. We translate $C'$ so that this map corresponds to
an even theta characteristic {\em{of $C'$}}.
Then $C'$ passes through $6$ points in $A_2$. This set is of the form 
$I\circleddash J$, where $I,J$ are subgroups of order $4$ of $A_2$.
As to the Weil pairing of $\mca{O}(2C)$ any subgroup of order $4$ is 
G\"{o}pel or Rosenhain. There are three cases.

When $I,J$ are both Rosenhain, say $I=\{0,[12],[23],[31]\}.$
By the condition $|I\circleddash J|=6$, $J$ is either $\{0,[14],[45],[51]\}$
or $\{0,[45],[56],[61]\}$ up to labeling. In the former case we have
$(C',\Theta_{123})\ge 4$. In the latter we have $(C',\Theta_{123})\ge 6$. 
These contradicts to $(C',\Theta_{123})=(C',C)=3$.

When $I,J$ are both G\"{o}pel, say $I=\{0,[12],[34],[56]\}$. 
As above, $J$ can be only $\{0,[23],[45],[61]\}$ up to labeling.
Again we have $(C',\Theta_{123})\ge 4$ and contradiction.

Thus remains only one case, $I\circleddash J$ is a standard Weber hexad $W$,
say $W=\{[12],[23],[31],[14],[25],[36]\}$. Since the group structure of $A$ is 
unique, $C'$ is invariant under the inversion. Let $X=Km(A)$.
$C'$ gives rise to a smooth rational curve $E$ on $X$ 
with the following relation.
\[ (E,H)=3, (E,N_{\alpha})=1\ (\alpha \in W), (E,N_{\alpha})=0\ (\alpha \notin W).\]
(From this, we see that the image $\overline{E}\subset \overline{X}$ is 
the unique normal rational curve through $W$ obtained by the Steiner construction.)
Now this $E$ satisfies $(E,2H-\sum_{\alpha\in W} N_{\alpha})=0$.
Hence this curve gives the eleventh node on $\overline{X}_W$, and 
$\sigma_W$ is not free in this case.
\newline


\section{Reye congruences}

We start with the kummer quartic surface $\overline{X}\subset \bb{P}^3$.
By choosing a Weber hexad $W$, we get a Hessian model $X_H$ of $X$. 
The construction is in \cite{dolgachev-keum,rosenberg}.
We reproduce it to set their notations to ours. 

Let $W=\{n_0, n_{12}, n_{23}, n_{34}, n_{45}, n_{51}\}$ be a Weber hexad 
of nodes on $\overline{X}$. 
\begin{lem}
No three points of $W$ are colinear.
No four points of $W$ are coplanar.
\end{lem}
\begin{proof}
If $l$ is a line containing three nodes, $(l,\overline{X})\ge 6$, contradiction to 
$(l,\overline{X})=\deg \overline{X}=4$.
The latter statement can be checked easily. Choose four points of $W$. For example,
we take $\{n_0, n_{12}, n_{23}, n_{34}\}$. Then we can choose a trope $\overline{\Theta}_{\beta}$
which contains exactly three of the four points. In the choice above, $\overline{\Theta}_3$
suffices. Thus the special plane $H_{\beta}$ is the unique hyperplane 
containing the three points, and the fourth is not coplanar.
\end{proof}
\begin{prop}
The linear system $L=|\mca{O}_{\overline{X}}(2)-W|$ with assigned base points
has dimension $3$. Points in $W$ are base points of $L$ of multiplicity $1$,
and $L$ has no unassigned base points.
\end{prop}
\begin{proof}
We rename $W=\{p_1,\cdots, p_6\}$. For each partition 
$\{i,j,k\}\cup \{l,m,n\}$ of $\{1,\cdots,6\}$,
we consider the conic $C_{ijk}=H'_{ijk}\cup H'_{lmn}$, where $H'_{ijk}$ is the hyperplane 
determined by $p_i,p_j,p_k$. 
We consider the four conics, $C_{123},C_{124},C_{134},C_{146}$.
We check that their intersection is precisely $W$ with multiplicities $1$ at 
each point.
\end{proof}
Thus the indeterminacies $n_{\alpha}\in W$ are resolved by single blowups, 
and we get a morphism $\rho : \mathrm{Bl}_W \overline{X}\rightarrow L^*$.
The image $X_H$ of this $\rho$ is the Hessian model of $X$.

However, to describe geometric properties held by $X_H$, it is better to use some special 
sections of $L$, found in \cite{dolgachev-keum}. Let
\[A_1=\overline{\Theta}_2+\overline{\Theta}_5+\overline{\Theta}_{126}+\overline{\Theta}_{156},
A_2=\overline{\Theta}_1+\overline{\Theta}_3+\overline{\Theta}_{126}+\overline{\Theta}_{236},
A_3=\overline{\Theta}_2+\overline{\Theta}_4+\overline{\Theta}_{236}+\overline{\Theta}_{346},\]
\[A_4=\overline{\Theta}_3+\overline{\Theta}_5+\overline{\Theta}_{346}+\overline{\Theta}_{456},
A_5=\overline{\Theta}_1+\overline{\Theta}_4+\overline{\Theta}_{156}+\overline{\Theta}_{456}.\]
By considering the corresponding sum of theta divisors on $J(C)$, we see in fact $A_i\in L$.
Let us take sections $s_i$ such that $\div (s_i)=A_i$.
By evaluating at nodes, we see that any four of $s_i$ are linearly independent.
Thus the rational map 
$\rho' :\overline{X}\dashrightarrow \bb{P}^4$ defined by these $s_i$ in \cite{dolgachev-keum}
is the same as $\rho$ plus a redundant coordinate. Obviously we can assume $\sum_{i=1}^5 s_i=0$.
\begin{thm}
The image $X_H$ of $\rho'$ is a quartic surface defined by 
\[s_1+\cdots+s_5=0,\ \lambda_1/s_1+\cdots+\lambda_5/s_5=0\]
for a suitable nonzero $\lambda_i$.
\end{thm}
\begin{proof}
We show that the five sections $s_1 s_2 s_3 s_4,s_1 s_2 s_3 s_5,s_1 s_2 s_4 s_5,
s_1 s_3 s_4 s_5, s_2 s_3 s_4 s_5$ are linearly dependent.
Note that on $J(C)$, they have the common zero divisor 
\[A=\Theta_1+\Theta_2+\Theta_3+\Theta_4+\Theta_5+\Theta_{123}+\Theta_{126}+\Theta_{236}
+\Theta_{234}+\Theta_{125},\] 
which sums up to $10\Theta_{\beta}$.
So they are sections of $\mca{O}_{J(C)}(16\Theta_{\beta}-10\Theta_{\beta})
=\mca{O}_{J(C)}(6\Theta_{\beta})$. Moreover, every $\div (s_i \cdots s_l)-A$
passes through all the points of $J(C)_2$. Since $s_i \cdots s_l$ are invariant sections 
under $\{\pm 1\}$, they passes $J(C)_2$ with multiplicity $2$.
Thus they are sections of $|6\Theta_{\beta}-2J(C)_2|$, and by Riemann-Roch,
this linear system has dimension $4$. So they have a linear relation as in the statement.
On the other hand, by evaluating at general points of tropes, 
we see that any four of $\{s_i \cdots s_l\}$ are linearly independent.
Thus $\lambda_i$ are nonzero.

Since this quartic equation is irreducible, this is the image of $\rho'$.
Computing the degree of $\rho'$, we find that $\rho'$ is birational.
\end{proof}

Let 
\[s_1+\cdots+s_5=0,\ s_1^3/\lambda_1+\cdots+s_5^3/\lambda_5=0\]
be a cubic surface in the Sylvester form. We denote it by $F$. It is easy to see that $X_H$ 
is the Hessian surface of $F$. By the jacobian criterion, we see that
\begin{lem}
The following are equivalent conditions on $\lambda_i$.
\begin{itemize}
\item $F$ is smooth.
\item Singularities of $X_H$ are exactly the set $\{P_{ijk}=\{s_i=s_j=s_k=0\}\}_{i,j,k}$.
\item For any choice of sign, $\pm \sqrt{\lambda_1}\pm \sqrt{\lambda_2}\pm \sqrt{\lambda_3}
\pm \sqrt{\lambda_4}\pm \sqrt{\lambda_5}\neq 0$.
\end{itemize}
\end{lem}

Now we consider the following birational involution on $X_H$:
\[\sigma: (s_1,\cdots, s_5)\mapsto (\lambda_1/s_1,\cdots, \lambda_5/s_5).\]
The fundamental points of $\sigma$ are $\{P_{ijk}\}$ and the strict transform is 
the line $L_{lm}=\{s_l=s_m=0\}$, where $\{l,m\}=\{1,\cdots, 5\}-\{i,j,k\}$.
Fixed points of $\sigma$ exists if and only if the third condition of Lemma 
\ref{jacobi} is not satisfied. 

These $X_H$ has $3$-dimensional parameters, which is the dimension of 
the moduli of curves of genus $2$. This is one less than the dimension of 
moduli of cubic surfaces. Thus we need a proof of the fact that for a general curve $C$,
the Hessian model of its jacobian kummer surface is nonsingular.
This is supplied by \cite{dardanelli-vangeemen}.
\begin{prop}
If $C$ is general enough, $X_H$ has no singularities other than $\{P_{ijk}\}$.
\end{prop}

Here is the mistake!!!!!!!!!!!!!!!!!!!!!!!!!!!!!!!!!!
{\exam} It is not always the case that $\sigma_W$ is free.
Let us consider the bielliptic case. By \cite{mukai},
this is the case of degenerate G\"{o}pel tetrads.
Let $G=\{[12],[23],[36],[14]\}$ be the G\"{o}pel tetrad.
Let us project $\overline{X}$ from $n_{12}$ onto $\bb{P}^2$. The 
branch locus is the six lines, which are images of 
six tropes passing $n_{12}$.
In degenerate case, the images of $n_{23},n_{36},n_{14}$ are collinear. 
Corresponding hyperplane of $\bb{P}^3$ cuts out $\overline{X}$ with two 
conics, $\overline{E}+\overline{F}$, meeting at four points of $G$.
The strict transforms in $X$ of these conics are denoted by $E,F$.
Each of these meets $N_{12},N_{23},N_{36},N_{14}$ transversally at one point, 
and is disjoint from the other exceptional curves and we have the linear equivalence
\[H\sim E+F+N_{12}+N_{23}+N_{36}+N_{14}.\]
From this, we see that $(2H-\sum_{\alpha\in W}N_{\alpha},E)=0.$
Thus $E$ is contracted to an eleventh node of $\overline{X}_W$.}


\end{document}